\numberwithin{equation}{section}
\def\R{{\bf R}}
\def\N{{\bf N}}
\def\d{\displaystyle}
\def\e{{\varepsilon}}
\def\wt{\widetilde}
\def\p{\partial}
\newcommand{\dt}{\partial_{t}}%
\newcommand{\al}{\alpha}
\newcommand{\be}{\beta}
\newcommand{\ga}{\gamma}
\newcommand{\ka}{\kappa}
\newcommand{\si}{\sigma}
\newcommand{\la}{\lambda}
\renewcommand{\th}{\theta}
\newcommand{\de}{\delta}
\newcommand{\om}{\omega}
\newcommand{\na}{\nabla}
\newcommand{\I}{\infty}
\newcommand{\EQS}[1]{\begin{align} #1 \end{align}}
\newcommand{\EQQS}[1]{\begin{align*} #1 \end{align*}}
\newcommand{\EQQ}[1]{\begin{equation*} \begin{split} #1
 \end{split} \end{equation*}}
\newcommand{\LR}[1]{{\langle {#1} \rangle }}
\newtheorem{thm}{Theorem}[section]
\newtheorem{lem}{Lemma}[section]
\newtheorem{prop}{Proposition}[section]
\newtheorem{rem}{Remark}[section]
\newtheorem{definition}{Definition}[section]
\title{On the critical decay for the wave equation with a cubic convolution in 3D}
\author{
Tomoyuki Tanaka
\footnote{Graduate School of Mathematics, Nagoya University, Chikusa-ku, Nagoya, 464-8602, Japan. e-mail: d18003s@math.nagoya-u.ac.jp}
\ and \
Kyouhei Wakasa
\footnote{Department of Creative Engineering, National Institute of Technology, Kushiro College, 2-32-1 Otanoshike-Nishi, Kushiro-Shi, Hokkaido 084-0916, Japan. e-mail: wakasa@kushiro-ct.ac.jp.
}
}
\date{
\[
\begin{array}{llll}
\mbox{\footnotesize{\bf Keywords:}}
& \mbox{\footnotesize Wave equation, Cubic convolution, Global existence,}\\
& \mbox{\footnotesize Blow-up, Lifespan, Critical exponent}\\
\mbox{\footnotesize{\bf MSC2010:}}
& \mbox{\footnotesize Primary 35B33 Secondary 35B44, 35L05, 35L71, 35B45}\\
\end{array}
\]
}
\begin{document}
\maketitle
\begin{abstract}
We consider the wave equation with a cubic convolution
$$\dt^2 u-\Delta u=(|x|^{-\ga}*u^2)u$$
in three space dimensions.
Here, $0<\ga<3$ and $*$ stands for the convolution in the space variables.
It is well known that if initial data are smooth, small and compactly supported, then $\ga\ge2$ assures unique global existence of solutions.
On the other hand, it is also well known that solutions blow up in finite time for initial data whose decay rate is not rapid enough even when $2\le \ga<3$.
In this paper, we consider the Cauchy problem for $2\le \ga<3$ in the space-time weighted $L^\I$ space in which functions have critical decay rate.
When $\ga=2$, we give an optimal estimate of the lifespan.
This gives an affirmative answer to the Kubo conjecture (see Remark right after Theorem 2.1 in \cite{K04}).
When $2<\ga<3$, we also prove unique global existence of solutions for small data.
\end{abstract}


\section{Introduction}

\quad \ \ We consider the following Cauchy problem:
\EQS{\label{IVP}
\begin{cases}
  \dt^2 u-\Delta u=(V_\ga*u^2)u, &(x,t)\in\R^3\times [0,T),\\
  u(x,0)= u_0(x), &x\in\R^3,\\
  \dt u(x,0)= u_1(x), &x\in\R^3.
\end{cases}
}
Here, $V_\ga(x)=|x|^{-\ga}$ for $0<\ga<3$ and $*$ stands for the convolution in the space variables.
For the initial data, we assume $(u_0,u_1)\in C^1(\R^3)\times C(\R^3)$.
The stationary problem for the equation with $\gamma=1$ and a mass term is a model for the Helium atom which is proposed by Hartree. Also, Menzala and Strauss \cite{MS82} studied the Cauchy problem  (\ref{IVP}) with a mass term.

The Cauchy problem for the wave equation with power nonlinearity $|u|^p$, which reads
\EQS{\label{IVP2}
\begin{cases}
  \dt^2 u-\Delta u=|u|^p, &(x,t)\in\R^n\times [0,T),\\
  u(x,0)= u_0(x), &x\in\R^n,\\
  \dt u(x,0)= u_1(x), &x\in\R^n,
\end{cases}
}
where $p>1$, $n\ge2$, and $u_0,u_1\in C^{\infty}(\R^n)$, has been extensively studied by many authors.
We review known results for the case that
$(u_0,u_1)$ has a compact support.
When $n\ge2$, it is known that the following Strauss' conjecture holds.
That is, there exists a critical exponent $p_0(n)$ such that the solution of (\ref{IVP2}) exists globally in time for small data if $p>p_0(n)$,
and the solution of (\ref{IVP2}) blows up in finite time for positive initial data if $1<p\le p_0(n)$.
Here, $p_0(n)$ is a positive root of the quadratic equation $(n-1)p^2-(n+1)p-2=0$. This was first
showed by John \cite{J79} except for $p=p_0(3)(=1+\sqrt{2})$ in $n=3$.
See \cite{GLS97,G81a,G81b,R87,S85,Si84,YZ06,Z07} for contributions to this conjecture, \cite{LZ14,L90,LS96,Ta15,TW11,Z92_three,Z93,ZH14} for the estimate of the lifespan and \cite{AT92,Asa86,K96,KK98,K93,T95,TUW10,T92,T93,T94} for results to Strauss' conjecture for slowly decaying data.

\if0
When $1<p\leq p_0(n)$, the estimates of the lifespan of solutions are
also important problems. Let us denote that the lifespan which is defined in
Definition \ref{def1}, $T(\e)$  is the supremum of all $T>0$, such that a solution exists to the problem (\ref{IVP2}).
To state the known results, we use the standard notation
$A\sim B$ meaning that there exist positive constants $c$ and $C$, independent of $\e$,
such that $cB\le A\le CB$ holds. The estimates of the lifespan are
\begin{eqnarray}
T(\e) \sim \e^{-2p(p-1)/\gamma(p,n)} &\hbox{if}&  1<p<p_0(n) \hbox{ and }  n\ge3,\nonumber\\
                                    &  \hbox{or} & 2<p<p_0(2) \hbox{ and } n=2;\nonumber\\
T(\e) \sim \exp(\e^{-p(p-1)}) &\hbox{if} & p=p_0(n)\label{exp-type}.
\end{eqnarray}
In $n=2,3$, Zhou \cite{Z93}, \cite{Z92_three} and Lindblad \cite{L90}
obtained these results in the case of $1<p<p_0(n)$.
Zhou \cite{Z93}, \cite{Z92_three} also studied the critical case $p=p_0(n)$.
For high space dimensions $n\ge4$, Lai and Zhou \cite{LZ14} derived the lower bound
when $1<p<p_0(n)$. The critical case was studied by Lindblad and Sogge \cite{LS96} who showed
the lower bound of lifespan when $n\le 8$ or initial data are radially symmetric.
For the upper bounds of the lifespan, Takamura \cite{Ta15} obtained in the subcritical case and
Takamura and Wakasa \cite{TW11} showed in the critical case.
Later, Zhou and Han \cite{ZH14} gave an alternative proof of \cite{TW11}
which also applies to $n=2,3$.
\fi

\if0
Next, we outline the results for slowly decaying data. Define $\LR{\cdot}:=1+|\cdot|$.
We assume that there exist constants $R>0$ and $C>0$ such that $(u_0,u_1)$ satisfies
\begin{equation}
\label{bu_asm1}
u_0(x)\equiv0 \quad \mbox{and} \quad u_1(x)\ge \frac{C}{\LR{x}^{1+\kappa}}
\end{equation}
for $|x|\ge R$ with
\begin{equation}
\label{bu_condi1}
0<\kappa<\frac{2}{p-1}.
\end{equation}
Then the solution of (\ref{IVP2}) blows up in finite time for any $p>1$.

On the other hand, we assume $(u_0,u_1)$ satisfies that
\begin{equation}
\label{ge_condi1}
\LR{x}^{1+\kappa}\left(\frac{|u_0(x)|}{\LR{x}}+\sum_{0<|\alpha|\le [n/2]+2}|\nabla_x^{\alpha}u_0(x)|
+\sum_{|\beta|\le[n/2]+1}|\nabla_x^{\beta}u_1(x)|\right)
\end{equation}
is small enough and
\begin{equation}
\label{ge_condi2}
\kappa\ge \frac{2}{p-1}\quad \mbox{and} \quad p>p_0(n).
\end{equation}
Then, the problem (\ref{IVP2}) has a global in time solution.
The work of above framework was first considered by Asakura \cite{Asa86}
Under the condition that $|u_0(x)|/\LR{x}$ was replaced by $|u_0(x)|$ in (\ref{ge_condi1}),
\cite{Asa86} proved the global existence result in $n=3$ except for $\kappa=2/(p-1)$. Also, he showed that the solution blows up in finite time if (\ref{bu_asm1}) and (\ref{bu_condi1}) hold in $n=3$.
We remark that Takamura, Uesaka and Wakasa \cite{TUW10} proposed the condition (\ref{ge_condi1})
by showing the blow-up result for $u_{0}(x)\not\equiv0$.
The critical case $\kappa=2/(p-1)$ in $n = 3$ was verified by Kubota \cite{K93} or
Tsutaya \cite{T94} independently.
When $n=2$, Agemi and Takamura \cite{AT92} showed the blow-up result, and
the global existence result was verified by Kubota \cite{K93}, both results were proved
by Tsutaya \cite{T92,T93} independently.
In high space dimensions, the blow-up result was obtained by Takamura \cite{T95}, and the global existence result was proved by Kubo \cite{K96}, Kubo and Kubota \cite{KK98}.
\fi

We turn back to our original problem (\ref{IVP}), and we recall deeply related results in three space dimensions $n=3$.
Hidano \cite{H20} proved the small data scattering to \eqref{IVP} for $2<\gamma<5/2$. On the other hand, he also proved the small data blow-up result to (\ref{IVP}) with $0<\gamma<2$ for some positive initial data with a compact support.
From this result, we can see that $\ga=2$ is the critical exponent to \eqref{IVP}.
When $\gamma=2$, Kubo \cite{K04} showed the unique global existence of the small solution for slowly decaying data.
More precisely, he considered \eqref{IVP} with initial data $(u_0,u_1)\in C^1(\R^3)\times C(\R^3)$ which is small in the following norm:
\EQS{\label{ge_condi3}
\sup_{x\in\R^3}
  \{(1+|x|)^{\ka}|u_0(x)|+(1+|x|)^{\ka+1}(|\na u_0(x)|+|u_1(x)|)\}
}
for $3/2<\ka<2$.
This is remarkably different from the wave equation with a power nonlinearity $|u|^p$ since the solution of \eqref{IVP2} blows up in finite time when $p=p_0(3)$.

In Remark right after Theorem 2.1 in \cite{K04}, Kubo also conjectured that the solution blows up in finite time when $\ka=3/2$.
This conjecture for $\ga=2$ seems to be natural from the scaling argument.
If $u$ is a solution to \eqref{IVP} with initial data $(u_0(x),u_1(x))$ on $[0,T]$, then
$$u_\si(x,t)=\si^{(5-\ga)/2}u(\si x,\si t)$$
is also a solution to \eqref{IVP} with initial data $(\si^{(5-\ga)/2}u_0(\si x),\si^{(7-\ga)/2}u_1(\si x))$
on $[0,T/\si]$.
We note that
\EQQS{
  \|(\si^{(5-\ga)/2}u_0(\si x),\si^{(7-\ga)/2}u_1(\si x))\|_{\tilde{Y}(\ka)}
  =\si^{(5-\ga)/2-\ka}\|(u_0(x),u_1(x))\|_{\tilde{Y}(\ka)},
}
where
\EQQS{
  \|(u_0,u_1)\|_{\tilde{Y}(\ka)}
  =\sup_{x\in\R^3}
    \{|x|^{\ka}|u_0(x)|+|x|^{\ka+1}(|\na u_0(x)|+|u_1(x)|)\}
}
(which is a homogeneous version of \eqref{ge_condi3}).
So, the scale transformation preserves the above norm when $\ka=(5-\ga)/2$.
This observation gives us an intuition that $\ka=(5-\ga)/2$ can be a threshold to devide global existence and blow up result.
In fact, when $2<\ga<3$, Tsutaya \cite{T03} gave a negative answer to the Kubo conjecture.
He studied (\ref{IVP}) for data $(u_0,u_1)\in C^1(\R^3)\times C(\R^3)$ which is small
\footnote{To be precise, initial data in \cite{T03} are more regular than $C^1(\R^3)\times C(\R^3)$.}
in \eqref{ge_condi3} and proved the solution exists globally in time for $2<\ga<3$ and $(5-\ga)/2<\ka<2$.
On the other hand, for $1/2<\kappa<(5-\gamma)/2$ and data satisfying
\begin{equation}
\label{blow-up-asm}
u_0(x)\equiv0 \quad \mbox{and}\quad u_1(x)\ge \frac{B}{(1+|x|)^{\ka+1}}\quad \mbox{for}\quad |x|\ge1
\end{equation}
for $B>0$,
he proved the blow-up result to the problem (\ref{IVP}) with $0<\gamma<3$.
In particular, $(5-\ga)/2<3/2$ holds when $2<\ga<3$, so Tsutaya proved global existence of the solution for $\ka=3/2$.
Therefore, the Kubo conjecture is not true when $2<\ga<3$ and remains open for the critical case $\ga=2$.
Our plan is to give an answer to the Kubo conjecture for $\ga=2$ and $\ka=3/2$, and we also treat the Cauchy problem \eqref{IVP} for $2<\ga<3$ and $\ka=(5-\ga)/2$.
For $\ga=2$, we prove the lower and upper bound of the lifespan.


Before stating our main results, we give the definition for the solution and the lifespan.

\begin{definition}[Solution, Lifespan]
\label{def1}
\begin{itemize}
\vskip5pt
\item (Solution): Let $T>0$ and $(u_0,u_1)\in C^{1}(\R^3)\times C(\R^3)$. We say that the function
$u$ is a solution to the Cauchy problem \eqref{IVP} if $u$ belongs to the class $C(\R^3\times [0,T))$ and satisfies the integral equation (\ref{IE_u}).
\item (Lifespan): We call the maximal existence time $T=T(\e u_0,\e u_1)$ to be lifespan. For initial data $(\e u_0,\e u_1)$, the lifespan $T=T(\e u_0,\e u_1)$ is denoted by $T(\e)$, namely
\begin{align*}
    T(\varepsilon)
    &:=\sup\big\{T\in (0,\infty]:\\
    &\text{there exists a unique solution $u$ to \eqref{IVP} with $(\e u_0,\e u_1)$ on $\R^n\times [0,T)$}\big\}.
\end{align*}
\end{itemize}
\end{definition}

The first two results are devoted to the lifespan of the solution when $\ga=2$ and $\ka=3/2$.
Especially, Theorem \ref{thm_bu} gives an affirmative answer to the Kubo conjecture.

\begin{thm}[Lower bound of the lifespan]\label{thm_lb}
  Let $\ga=2$ and $(u_0,u_1)\in Y(3/2)$.
  Then, there exist $C>0$ and $\e_0=\e_0(u_0,u_1)>0$ such that the lifespan $T(\e)$ of solutions \eqref{IVP} with $(u(x,0),\dt u(x,0))=(\e u_0(x),\e u_1(x))$ satisfies
  $$T(\e)\ge \exp(C\e^{-2})$$
  for $\e\in(0,\e_0]$.
\end{thm}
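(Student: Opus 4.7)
The plan is to recast \eqref{IVP} as the Duhamel integral equation
\EQQS{
u = u_L + L\bigl[(V_2*u^2)\,u\bigr],
}
where $u_L$ is the free solution with data $(\e u_0,\e u_1)$ and $L$ denotes the retarded fundamental solution of $\p_t^2-\Delta$ in three space dimensions, and to solve it by a contraction argument in a weighted $L^\I$ space tailored to the critical decay $\ka=3/2$. Set
\EQQS{
w(x,t):=(1+t+|x|)(1+|t-|x||)^{1/2},\qquad \|u\|_{X_T}:=\sup_{(x,t)\in\R^3\times[0,T]}w(x,t)|u(x,t)|.
}
The weight is dictated by John's spherical means representation: for $(u_0,u_1)\in Y(3/2)$ one obtains $\|u_L\|_{X_T}\le C_0\|(u_0,u_1)\|_{Y(3/2)}$ uniformly in $T$, so the linear part of the iteration contributes at most $C_0\e\|(u_0,u_1)\|_{Y(3/2)}$.

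The heart of the proof is the trilinear estimate
\EQQS{
\bigl\|L[(V_2*u^2)\,u]\bigr\|_{X_T}\le C_1\log(2+T)\,\|u\|_{X_T}^3,
}
which I would establish in two steps. First, starting from $|u|\le \|u\|_{X_T}/w$, derive a pointwise bound on $V_2*u^2$. Writing the convolution in polar coordinates around the origin and using the explicit identity
\EQQS{
\int_{S^2}\frac{dS_\om}{|x-r\om|^2}=\frac{2\pi}{r|x|}\log\biggl|\frac{r+|x|}{r-|x|}\biggr|,\qquad |x|\ne r,
}
I would split the radial range into the zones $r\ll s$, $r\sim s$, $r\gg s$, with the main contribution coming from the concentration shell $|y|\sim s$, to arrive at a bound of the schematic form
\EQQS{
\bigl|(V_2*u^2)(x,s)\bigr|\le \frac{C\,\|u\|_{X_T}^2}{(1+s+|x|)^2}\bigl(\log(2+s+|x|)\bigr)^2.
}
(This matches the scaling $V_2*u^2\sim\la^{-2}$ obtained from the critical homogeneity $u\sim\la^{-3/2}$.) Multiplying by $|u|\le\|u\|_{X_T}/w$ and inserting into Kirchhoff's formula
\EQQS{
LF(x,t)=\frac{1}{4\pi}\int_0^t\!\!\int_{|x-y|=t-s}\!\!\frac{F(y,s)}{t-s}\,dS_y\,ds,
}
the standard change of variables on the backward characteristic cone performs the angular integral explicitly, while the time integration of the remaining one-dimensional weights along the generator of the cone produces exactly one further factor $\log(2+T)$, closing the trilinear estimate.

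Combining, set $M:=2C_0\e\|(u_0,u_1)\|_{Y(3/2)}$ and choose $T$ so that $C_1M^2\log(2+T)\le 1/2$, which rearranges to $T\le \exp(c\e^{-2})$ for some $c=c(\|(u_0,u_1)\|_{Y(3/2)})>0$. Then $u\mapsto u_L+L[(V_2*u^2)u]$ maps the ball $\{\|u\|_{X_T}\le M\}$ into itself, and a parallel estimate on the trilinear difference, obtained from the identity
\EQQS{
(V_2*u^2)u-(V_2*v^2)v=\bigl(V_2*(u+v)(u-v)\bigr)u+(V_2*v^2)(u-v),
}
yields contraction, so that a unique fixed point exists on $[0,T]$ and hence $T(\e)\ge \exp(C\e^{-2})$. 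The principal obstacle is the pointwise estimate on $V_2*u^2$: at the critical weight $w$, the integral $\int|u(y,s)|^2/|x-y|^2\,dy$ is only barely convergent in the region where the integration sphere is tangent to the concentration shell of $u^2$, and the logarithms it produces must be tracked precisely so that---together with the single logarithm absorbed by the Duhamel integration---the trilinear estimate suffers loss $\log(2+T)$ and no more. Any polynomial loss in $T$ would degrade the lifespan bound below $\exp(C\e^{-2})$ and be incompatible with the matching upper bound proved in the next theorem.
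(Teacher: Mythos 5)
Your overall framework --- the weighted norm $\LR{t+|x|}\LR{t-|x|}^{1/2}|u(x,t)|$, a pointwise estimate for $V_2*u^2$, and a contraction on a time interval fixed by a smallness condition of the form $C\e^2\cdot(\text{log loss})\le 1$ --- is the same as the paper's. But the quantitative heart of your argument does not close. From the pointwise bound you propose,
\EQQS{
|(V_2*u^2)(x,s)|\lesssim\frac{(\log(2+s+|x|))^2}{(1+s+|x|)^2}\|u\|_{X_T}^2,
}
the Duhamel step cannot yield a trilinear estimate with a single factor $\log(2+T)$: multiplying by $\|u\|_{X_T}/w$ and inserting into the cone integral, the change of variables $\al=\la+s$, $\be=\la-s$ gives $\frac1r\int_{|t-r|}^{t+r}(\log\LR{\al})^2\LR{\al}^{-3/2}d\al$ (the $\be$-integral contributes $\LR{\al}^{1/2}$, no extra log), and in the interior region ($|x|$ bounded, $t$ large) this is of size $(\log t)^2/(\LR{t+r}\LR{t-r}^{1/2})$; the squared logarithm is genuinely there and is not absorbed by the time integration. (Your phrase ``one further factor $\log(2+T)$'' is also internally inconsistent: one more log on top of $(\log)^2$ would be three, yet you claim a single log in the trilinear bound.) With the true loss $(\log(2+T))^2$, the smallness condition becomes $C\e^2(\log T)^2\le1$ and the method only gives $T(\e)\ge\exp(C\e^{-1})$ --- exactly the insufficient bound the paper isolates in \eqref{eq_tsutaya} and the surrounding discussion.

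The missing idea is the refinement in Proposition \ref{prop_potential}: near the light cone one must trade a small power of $\LR{s+|y|}$ for a power of $\LR{s-|y|}$ so as to remove one logarithm from the convolution estimate,
\EQQS{
|(V_2*u^2)(y,s)|\lesssim\frac{1+\log\LR{s+|y|}}{\LR{s+|y|}^{7/4}\LR{s-|y|}^{1/4}}\|u\|_{X}^2,
}
where any exponent $b\in(0,1/2)$ in place of $1/4$ works: $b=1/2$ (Tsutaya's choice) re-creates a logarithm in the $\be$-integration of the Duhamel term, while $b>1/2$ loses too much $\LR{s+|y|}$ decay to close the estimate. This is obtained by applying Lemma \ref{lem_2.1} (a) with $\de=1/4$ when $(y,s)$ is near the cone and $\de=1$ away from it; in the interior one in fact recovers the full decay $\LR{s+|y|}^{-2}$ with a single log. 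Then the $\be$-integral in the Duhamel term converges (exponent $3/4<1$), and Lemma \ref{lem_2.2} with $l=1$ gives the trilinear estimate with a single $\log(3+T)$ (Proposition \ref{prop_duhamel}); with that input your final bookkeeping $C\e^2\log(3+T)\le1\Rightarrow T(\e)\ge\exp(C\e^{-2})$ does go through, and your free-solution bound and contraction/difference scheme are otherwise consistent with the paper's proof, which runs an equivalent Picard iteration.
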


\begin{rem}
  A class of initial data $Y(\ka)$ is defined in Section 2.
\end{rem}


\begin{thm}[Upper bound of the lifespan]\label{thm_bu}
  Let $\ga=2$. Assume that (\ref{blow-up-asm}) with $\ka=3/2$ holds.
  Then, there exist $C>0$ and $\e_0=\e_0(u_1)>0$ such that the lifespan $T(\e)$ of solutions \eqref{IVP} with $(u(x,0),\dt u(x,0))=(0,\e u_1(x))$ satisfies
  $$T(\e)\le \exp(C\e^{-2})$$
  for $\e\in(0,\e_0]$.
\end{thm}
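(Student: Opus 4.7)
My plan is to iterate pointwise lower bounds on $u$, in the spirit of John--Schaeffer--Takamura's arguments for the critical case of $\dt^2 u-\Delta u=|u|^p$. Since $V_2=|\cdot|^{-2}\ge 0$ and the 3D retarded fundamental solution is nonnegative, the nonlinearity is positivity-preserving and monotone in the positive cone, so $u\ge u^L\ge 0$ pointwise, where $u^L$ is the free solution with data $(0,\e u_1)$. Iterating the integral equation
\[
  u \;=\; u^L + L[(V_2*u^2)\,u],
\]
with $L$ the Duhamel operator, gives $u\ge w_n$ for the sequence defined by $w_0:=u^L$ and $w_{n+1}:=L[(V_2*w_n^2)\,w_n]$. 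The goal is to show that these lower bounds diverge at some point when $t\ge\exp(C\e^{-2})$, which is incompatible with the existence of a classical solution there.

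First I would derive a sharp lower bound for $u^L$. Assumption (\ref{blow-up-asm}) with $\ka=3/2$, combined with Kirchhoff's formula and a radial reduction of the spherical mean, yields
\[
  u^L(x,t)\;\ge\;\frac{\e B}{2|x|}\int_{t-|x|}^{t+|x|}\!\frac{\rho\,d\rho}{(1+\rho)^{5/2}}\;\ge\;\frac{C_0\,\e}{(1+t+|x|)(1+t-|x|)^{1/2}}
\]
on the region $\{(x,t):t-|x|\ge 1,\ t\ge T_0\}$ for some $T_0=T_0(u_1)>0$. This is precisely the critical decay profile for $\ka=3/2$.

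The heart of the argument is a recursion for the normalized lower bound $a_n(x,t):=w_n(x,t)\,(1+t+|x|)(1+t-|x|)^{1/2}$. At the critical parameters $(\ga,\ka)=(2,3/2)$, the two successive integrations in $w_{n+1}=L[(V_2*w_n^2)w_n]$ -- the spatial convolution $\int|x-y|^{-2}w_n^2(y,t)\,dy$ and the Duhamel integration along the backward light cone -- each sit at a logarithmic threshold, so restricting each integration to a carefully chosen annular slice in $|y|$, $|x|$, and $s$ (in the style of Agemi--Kurokawa--Takamura slicing) yields a bound of the form
\[
  a_{n+1}(x,t)\;\ge\;c\,\log(t/T_n)\,a_n^3(x,t)
\]
on the slice, where $\{T_n\}$ is a bounded decreasing sequence with $T_n\to T_\infty>0$. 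Solving this recursion from $a_0\ge C_0\e$ gives $a_n(x,t)\ge (C_0\e)^{3^n}\bigl(c\log(t/T_\infty)\bigr)^{(3^n-1)/2}$ up to fixed multiplicative constants independent of $n$.

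Taking the $3^n$-th root, $a_n^{1/3^n}$ converges to $C_0\e\,(c\log(t/T_\infty))^{1/2}$, so $a_n\to\infty$ as $n\to\infty$ whenever $(C_0\e)^2\,c\,\log(t/T_\infty)>1$, that is, whenever $t>T_\infty\exp\bigl(c^{-1}C_0^{-2}\e^{-2}\bigr)$. Because $w_n\le u$, such divergence would force $u(x,t)=+\infty$ at an interior point of the slicing region, contradicting classical regularity of $u$ on $[0,T(\e))$. Hence $T(\e)\le\exp(C\e^{-2})$ for $\e$ small, as claimed. The main obstacle is the tightness of the convolution-plus-Duhamel computation at the critical pairing $(\ga,\ka)=(2,3/2)$: any spurious logarithmic loss in either integration would weaken the exponent $-2$ in the lifespan, so the slicing in $|x|$ and in $t-|x|$ must be arranged so that exactly one factor of $\log t$ accumulates per iteration, as in the analogous critical-exponent blow-up proofs of Takamura--Wakasa and Zhou--Han.
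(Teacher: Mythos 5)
Your proposal follows essentially the same route as the paper's proof: positivity via comparison, the lower bound $u\ge C_0\e\,(t+r)^{-1}(t-r)^{-1/2}$ on $t-r\ge 1$ for the free part, and then a John-type iteration with Agemi--Kurokawa--Takamura slicing that gains exactly one logarithm per cubic step, yielding log-exponents $(3^j-1)/2$ and blow-up once $\e^2\log t\gtrsim 1$, hence $T(\e)\le\exp(C\e^{-2})$. The only differences are cosmetic: the paper iterates lower bounds on $u$ itself (rather than Picard iterates $w_n$), its slicing parameters $l_j$ increase to $2$ (your $T_n$ should likewise tend monotonically to a positive limit, with the limiting value used conservatively), and the per-step constant in the recursion is not $n$-independent but degrades like $24^{-j}$, a loss the paper absorbs through the convergent sum $\sum_k k\,3^{-k}$ without affecting the $\exp(C\e^{-2})$ threshold.
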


On the other hand, we can show the existence of global solutions to \eqref{IVP} with small data when $2<\ga<3$ and $\ka=(5-\ga)/2$.

\begin{thm}[Global existence for critical decaying data]\label{thm_ge}
  Let $2<\ga<3$ and $(u_0,u_1)\in Y((5-\ga)/2)$.
  Then, there exists $\e_0=\e_0(\ga,u_0,u_1)>0$ such that for any $\e\in(0,\e_0]$ there exists a unique solution $u\in X_\ga(\I)$ of \eqref{IVP} with $(u(x,0),\dt u(x,0))=(\e u_0(x),\e u_1(x))$.
\end{thm}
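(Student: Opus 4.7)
The plan is to prove existence via a contraction-mapping argument in the space-time weighted $L^\I$ space $X_\ga(\I)$ that captures the critical decay rate $\ka=(5-\ga)/2$ inherited from the scaling $u_\si(x,t)=\si^{(5-\ga)/2}u(\si x,\si t)$ discussed in the introduction. Following the framework of Asakura, Kubota and Tsutaya, one equips $X_\ga(T)$ with a weight $w(t,x)$ of the form $(1+t+|x|)(1+|t-|x||)^{\ka-1}$ (or an equivalent form defined in Section 2) and establishes two linear estimates: first, that the homogeneous solution $u^L$ of the wave equation with data $(\e u_0,\e u_1)\in Y(\ka)$ satisfies $\|u^L\|_{X_\ga}\le C\e\|(u_0,u_1)\|_{Y(\ka)}$ through pointwise Kirchhoff-type bounds; and second, that the Duhamel operator $L$ associated with $\dt^2-\Delta$ (zero data, source $F$) satisfies $\|L[F]\|_{X_\ga}\le C\|F\|_{Z_\ga}$ for a suitable source-weight space $Z_\ga$ coming from John's formula in three dimensions.

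The heart of the argument is the trilinear estimate $\|(V_\ga*u^2)u\|_{Z_\ga}\le C\|u\|_{X_\ga}^3$, uniform in $T$. From the pointwise bound $|u(y,t)|\le \|u\|_{X_\ga}w(t,y)^{-1}$ one gets
\EQQS{
  |(V_\ga*u^2)(x,t)|\le \|u\|_{X_\ga}^2\int_{\R^3}\frac{dy}{|x-y|^{\ga}w(t,y)^2},
}
and one splits this $y$-integral into near ($|x-y|\le 1$), intermediate, and far ($|x-y|\gtrsim 1+t+|x|$) regions. Integrability of the Riesz kernel at $y=x$ uses $\ga<3$; convergence of the long-range contribution uses $\ga>2$ and produces a decay factor like $(1+t+|x|)^{2-\ga}$. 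Multiplying by $|u(x,t)|\le \|u\|_{X_\ga}w(t,x)^{-1}$ and choosing $Z_\ga$ to match the weight of the resulting bound yields the trilinear inequality. The corresponding difference estimate is obtained via the identity
\EQQS{
  (V_\ga*u^2)u-(V_\ga*v^2)v=(V_\ga*(u^2-v^2))u+(V_\ga*v^2)(u-v),
}
using the same regional decomposition for each term.

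Granting these estimates, the map $\Phi:v\mapsto u^L+L[(V_\ga*v^2)v]$ becomes a self-map and contraction on the closed ball of radius $2C\e\|(u_0,u_1)\|_{Y(\ka)}$ in $X_\ga(\I)$ for $\e$ sufficiently small, and the Banach fixed point theorem produces the unique global solution $u\in X_\ga(\I)$. The main technical obstacle is keeping the convolution estimate tight and uniform in $T$ while working exactly at the critical decay $\ka=(5-\ga)/2$: the exponents must balance on the nose in every sub-region, and the long-range condition $\ga>2$ is used to its full strength. This sharpness is consistent with Theorem \ref{thm_bu}, where the borderline case $\ga=2$, $\ka=3/2$ produces the logarithmic divergence responsible for the eventual blow-up, so the strict inequality $\ga>2$ is essential at precisely the step where our global argument succeeds.
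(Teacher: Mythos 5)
Your overall architecture---the weighted $L^\I$ space with weight $\LR{t+|x|}\LR{t-|x|}^{(3-\ga)/2}$, a pointwise estimate for $V_\ga*u^2$, a weighted estimate for the Duhamel operator, and a Banach fixed point argument for $\Phi[v]=\e u^0+L((V_\ga*v^2)v)$---is the same as the paper's (Lemma \ref{lem_free}, Propositions \ref{prop_potential2} and \ref{prop_duhamel}). The genuine gap is in the step you treat as routine bookkeeping, namely the pointwise convolution estimate. Splitting the integral only according to the size of $|x-y|$ (near/intermediate/far) and letting ``the exponents balance on the nose'' is precisely the borderline situation that produces a logarithm: after the spherical-means reduction, when the observation point lies near the light cone ($\la:=|y|\approx s$) and the integration variable is also near the cone, one faces
\[
\int_0^{s}\frac{d\eta}{\LR{s-\eta}^{3-\ga}\,\LR{\la-\eta}^{\ga-2}}\,,\qquad \la\approx s,
\]
whose exponents sum to exactly $1$, so it behaves like $\log\LR{s+\la}$; this is the analogue of \eqref{eq_tsutaya}. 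If your bound for $V_\ga*u^2$ carries this logarithm (equivalently, only gives $\LR{s+|y|}^{-2}$ times a log, with no gain in $\LR{s-|y|}$), then the Duhamel estimate produces a factor $\log(3+T)$ exactly as in the $\ga=2$ case of Proposition \ref{prop_duhamel}, the contraction constant is not uniform in $T$, and you obtain only an almost-global (exponential-lifespan) result rather than Theorem \ref{thm_ge}.

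The missing idea is the deliberate redistribution of weights near the cone: one must sacrifice part of the $\LR{s+|y|}^{-2}$ decay in order to gain a positive power of $\LR{s-|y|}$ \emph{and} remove the logarithm. This is what Proposition \ref{prop_potential2} accomplishes, via interpolation inequalities of the type $\LR{s-\eta}\ge\LR{s-\eta}^{\th}\LR{\la-\eta}^{1-\th}$ with the specific $\th$ of \eqref{def_theta} (and the $1/4$-splittings near the cone), yielding the bound $C\LR{s+|y|}^{-(5+\ga)/4}\LR{s-|y|}^{-(3-\ga)/4}$. One must then check that this closes: $3(3-\ga)/4<1$ makes the $\be$-integral in the Duhamel term convergent, and the remaining power $\LR{\al}^{-(5-\ga)/2}$ feeds into Lemma \ref{lem_2.2} to reproduce exactly the $X_\ga$ weight with a $T$-independent constant. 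Your proposal never specifies the weight defining $Z_\ga$ nor performs this closing computation (``choosing $Z_\ga$ to match the resulting bound'' is circular until you do), and it contains no device for eliminating the borderline logarithm, which is the heart of the proof at the critical decay $\ka=(5-\ga)/2$. As written, the fixed-point scheme would not yield global existence.
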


\begin{rem}
  The solution space $X_\ga(T)$ is defined in Section 2.
\end{rem}

Now, we state ideas to obtain our results.
The key ingredient to show Theorem \ref{thm_lb} is Proposition \ref{prop_potential}, which is a refinement of Proposition 2.2 in \cite{T14} in some sense.
Simply following the idea of Proposition 2.2 in \cite{T14}, we get only the following
\EQS{\label{eq_tsutaya}
  |(V_2 * u^2)(x,t)|
  \lesssim \frac{1+(\log(1+t+|x|))^2}{(1+t+|x|)^2}\|u\|_X^2,
  \quad (x,t)\in\R^3\times[0,T)
}
(see \eqref{sols_norm} for the definition of $\|\cdot\|_X$).
We have a square of $\log(1+t+|x|)$ in the right hand side since we are in the doubly critical situation $\ga=2$ and $\ka=3/2$.
Then, a standard argument (see the proof of Theorem \ref{thm_lb}) implies that $T(\e)\ge\exp(\tilde{C}\e^{-1})$ for some $\tilde{C}>0$ and small $\e>0$.
This lower bound is insufficient, and we need to eliminate one of the logarithmic growth in \eqref{eq_tsutaya}.
However, it seems to be difficult to do so without losing any powers of $(1+t+|x|)^{-2}$.
This is the price to pay for the estimate
\if0
\EQQS{
  |(V_2 * u^2)(x,t)|
  \lesssim
  \frac{\log(1+t+|x|)}{(1+t+|x|)^{7/4}(1+|t-|x||)^{1/4}}\|u\|_X^2
}
\fi
in Proposition \ref{prop_potential} (see e.g. (ii) of Case 1 in the proof of Proposition \ref{prop_potential} for the worst estimate).
One can find a similar argument in the proof of Theorem 2.1 in \cite{K04}.
The most important point in the proof is that the price does not exceed $1/2$, i.e, we need a estimate
\EQS{\label{eq_conv1}
  |(V_2 * u^2)(x,t)|
  \lesssim
  \frac{1+\log(1+t+|x|)}{(1+t+|x|)^{a}(1+|t-|x||)^{b}}\|u\|_X^2,
  \quad (x,t)\in\R^3\times[0,T)
}
for $a+b=2$ and $0<b<1/2$.
For that purpose, we use (a) of Lemma \ref{lem_2.1} with a different choice of $\de$ depending on where $(x,t)$ is located.
If $(x,t)$ is away from the light cone (i.e., $t\ge 2|x|$ or $2t\le |x|$), we employ (a) of Lemma \ref{lem_2.1} with $\de=1$.
On the one hand, we use (a) of Lemma \ref{lem_2.1} with $\de=1/4$ (then $b=1/4$) when $(x,t)$ is close to the light cone (i.e., $|x|/2\le t\le 2|x|$).
To be precise, it suffices to choose $0<\de<1/2$ in this case.
In other words, if we choose $\de=1/2$ (which corresponds to Tsutaya's argument in \cite{T14}), then we have $b=1/2$, and we may have another logarithmic growth in the estimate of the Duhamel term.
If we choose $\de>1/2$, then $b>1/2$, and it is difficult to close the estimate (in the sense of Proposition \ref{prop_duhamel}) because of the lack of the power of $(1+t+|x|)$.

Proposition \ref{prop_potential2} is the key estimate for the proof of Theorem \ref{thm_ge}.
Indeed, by the same spirit as stated above,
we lose a small order of the decay $(1+t+|x|)^{-2}$ to compensate the logarithmic growth which comes from the critical decay rate of initial data, which results in we have the logarithmic growth free estimate.

To get the upper bounds of the lifespan, we use the iteration argument by \cite{J79}, together
with the slicing method in Agemi, Kurokawa and Takamura \cite{AKT00}.
The slicing method is an useful method for obtaining the logarithmic growth of the solution in the critical nonlinearity. An essential part in this method is to slice the integral domain after applying the integration.

The key fact in Theorem \ref{thm_bu} is to get a logarithmic growth for the convolution term:
\begin{equation}
\label{eq_conv2}
(V_{2}*u^2)(x,t)\ge\frac{C\e^2 |x| \log(t-|x|) }{(t+|x|)^3}\quad \mbox{for}\quad t-|x|\ge1
\end{equation}
(see (\ref{frame-conv})),
where $u$ is the solution to \eqref{IVP} with the assumption of Theorem \ref{thm_bu}.
We note that the order except for the logarithmic function in (\ref{eq_conv2}) is the
same as the estimate (\ref{eq_conv1}) away from the light cone.
The estimate (\ref{eq_conv2}) follows from the estimates for the free wave equations:
\begin{equation}
\label{eq_free-est}
u(x,t)\ge \frac{C\e }{(t+r)(t-|x|)^{1/2}} \quad \mbox{for}\quad t-|x|\ge1,
\end{equation}
(see (\ref{first-est})).
Putting (\ref{eq_conv2}) and (\ref{eq_free-est}) to the integral of the Duhamel term in (\ref{IE_u})
and slicing the integration of the domain, we can get the first step of the iteration argument
while preserving the logarithmic function (see (\ref{it_1}) for $j=1$).

\if0
We note that this case is \lq\lq critical'' situation in view of the scaling argument. \textcolor{red}{Because, for the rescaled function (\ref{res-sol}), the power of $\sigma$ is 3/2 which is the same as that of the decay of
power of the initial data. See also this argument in the introduction \cite{TUW11} and
the introduction in Tsutaya \cite{T14}. }
\fi

\if0
In the present paper, we solved the problem of the conjecture by Kubo \cite{K04}. Also we
obtain the lower bound of the lifespan and to derive its upper bound which shows the optimality
of the lower bound. Our estimate of the lifespan is exponential type which is a similar to the estimate in (\ref{exp-type}).
\fi

This paper is organized as follows.
In Section 2, we prepare some notations and useful lemmas.
In Section 3, we prove Theorems \ref{thm_lb} and \ref{thm_ge}.
In particular, Propositions \ref{prop_potential} and \ref{prop_potential2} are shown in this section.
In Section 4, Theorem \ref{thm_bu} is obtained.

\section{Notations, Preliminaries and Useful lemmas}
\quad \ \ In this section, we fix notations and collect some estimates which are useful when we estimate the nonlinear term.

\subsection{Notations and Preliminaries}
\quad \ \ For positive numbers $a$ and $b$, we write $a\lesssim b$ when there exists a positive constant $c$ such that $a\le cb$.
We also write $\LR{\cdot}:=1+|\cdot|$.

We introduce the solution space $X$ to the problem (\ref{IVP}) with the data $(u_0,u_1)\in C^1(\R^3)\times C(\R^3)$ given by
\EQS{\label{sols}
  X_\ga(T):=\{u\in C(\R^3\times[0,T)):\|u\|_{X_\ga(T)}<\I\},
}
where $T>0$ and the norm $\|\cdot\|_{X_\ga(T)}$ is defined by
\EQS{\label{sols_norm}
\|u\|_{X_\ga(T)}
 :=\sup_{(x,t)\in\R^3\times[0,T)}
  \LR{t+|x|}\LR{t-|x|}^{(3-\ga)/2}|u(x,t)|.
}
If there is no confusions, we write $\|\cdot\|_X=\|\cdot\|_{X_\ga(T)}$.
We also introduce a class of the initial data $Y(\ka)$ defined by
\EQQS{
  &Y(\ka)=\{(u_0,u_1)\in C^1(\R^3)\times C(\R^3);\|(u_0,u_1)\|_{Y(\ka)}<\I\},\\
  &\|(u_0,u_1)\|_{Y(\ka)}=\sup_{x\in\R^3}
    \{\LR{x}^{\ka}|u_0(x)|+\LR{x}^{\ka+1}(|\na u_0(x)|+|u_1(x)|)\}.
}
As stated in Introduction, we consider \eqref{IVP} with initial data in $Y((5-\ga)/2)$.
The integral equation on $\R^3\times [0,T)$ associated with the Cauchy problem \eqref{IVP} is
\EQS{\label{IE_u}
  u(x,t)= u^0(x,t)+L((V_\ga*u^2)u)(x,t),\quad (x,t)\in\R^3\times[0,T),
}
where $u^0$ is defined by
\EQS{\label{u^0}
  u^0(x,t):=\dt W(u_0|x,t)+W(u_1|x,t), \quad (x,t)\in\R^3\times[0,T),
}
and the integral operator $L$ on $C(\R^3\times [0,\infty))$ is defined by
\EQS{\label{L}
  L(F)(x,t):=
  \int_0^t W(F(\cdot,s)|x,t-s)ds,
}
where $F\in C(\R^3\times [0,\infty))$. Here, $W$ is
\EQS{\label{solop}
  W(\phi|x,t):=
  \frac{t}{4\pi}\int_{|\om|=1}\phi(x+t\om)dS_{\om},
}
for $\phi\in C(\R^3)$,  where $dS_{\omega}$ denotes the area element of the two dimensional unit sphere $S_2:=\{\om\in \R^3;|\om|=1\}$ in $\R^3$.

\subsection{Useful lemmas}

\quad \ \ Lemmas collected in this subsection are fundamental tools for the study of wave equations.
So, we omit some proofs.

\begin{lem}
\label{lm:Planewave}
Let $b:(0,\infty)\rightarrow\R$ be a continuous function. Then for any $\rho>0$ and $x\in \R^3$ with $r=|x|$, the identity holds:
\begin{equation}
\label{Planewave}
\begin{array}{ll}
\d \int_{|\omega|=1}b(|x+\rho \omega|)dS_\omega
\d = \frac{2\pi}{r\rho }\int_{|\rho-r|}^{\rho+r}\lambda b(\lambda)
d\lambda.
\end{array}
\end{equation}
\end{lem}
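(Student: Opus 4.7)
The plan is to exploit rotational symmetry to reduce the surface integral to a one-dimensional integral, and then finish with a substitution. The integrand $b(|x+\rho\omega|)$ depends on $\omega$ only through the inner product $x\cdot\omega$, hence only through the angle between $\omega$ and $x$; in particular the integral is invariant under any rotation of $\R^3$ that fixes the line spanned by $x$. I therefore lose no generality in assuming $x=(0,0,r)$ with $r=|x|>0$.

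With this choice, I would parametrize $\omega$ in spherical coordinates $\omega=(\sin\theta\cos\varphi,\sin\theta\sin\varphi,\cos\theta)$, $\theta\in[0,\pi]$, $\varphi\in[0,2\pi)$, so that $dS_\omega=\sin\theta\,d\theta\,d\varphi$ and
\EQQS{
  |x+\rho\omega|^2 = r^2+\rho^2+2r\rho\cos\theta.
}
The $\varphi$-integration is trivial and contributes a factor $2\pi$, leaving
\EQQS{
  \int_{|\omega|=1}b(|x+\rho\omega|)\,dS_\omega
  =2\pi\int_0^\pi b\bigl(\sqrt{r^2+\rho^2+2r\rho\cos\theta}\bigr)\sin\theta\,d\theta.
}

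The decisive step is the substitution $\lambda=\sqrt{r^2+\rho^2+2r\rho\cos\theta}$, which yields $\lambda\,d\lambda=-r\rho\sin\theta\,d\theta$. As $\theta$ runs from $0$ to $\pi$, the quantity $\lambda$ decreases monotonically from $\sqrt{(r+\rho)^2}=r+\rho$ down to $\sqrt{(r-\rho)^2}=|r-\rho|$; the minus sign in the Jacobian absorbs the reversal of the limits and produces exactly \eqref{Planewave}. Since this is essentially a one-line change of variables once rotational symmetry has been invoked, I do not anticipate any real obstacle; the only points worth double-checking are the endpoint identifications $\sqrt{(r\pm\rho)^2}=r+\rho,\ |r-\rho|$ and the fact that the stated identity tacitly requires $r>0$ (the degenerate case $r=0$ reduces to the trivial identity $\int_{|\omega|=1}b(\rho)\,dS_\omega=4\pi b(\rho)$ and is not needed elsewhere in the paper).
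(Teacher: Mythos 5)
Your proof is correct: the reduction by rotational invariance, the identity $|x+\rho\omega|^2=r^2+\rho^2+2r\rho\cos\theta$, and the substitution $\lambda\,d\lambda=-r\rho\sin\theta\,d\theta$ give exactly \eqref{Planewave}, including the endpoints $r+\rho$ and $|r-\rho|$ and the tacit requirement $r>0$. The paper does not prove the lemma but refers to Chapter I of \cite{J55} (and \cite[Lemma 2.1]{K04}), where precisely this classical computation is carried out, so your argument simply supplies the omitted standard proof.
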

For the proof of this lemma, see Chapter I in \cite{J55} (see also \cite[Lemma 2.1]{K04}).

\begin{lem}\label{lem_2.1}
  Let $0<\de\le 1$ and $\ka>0$.

  {\rm (a)} Then there exists $C=C(\de)>0$ such that for any $(r,t)\in[0,\I)^2$,
  \EQQS{
  \int_{|t-r|}^{t+r}\frac{d\rho}{\LR{\rho}}
  \le\frac{C\min\{t^\de,r^\de\}}{\LR{t-r}^{\de}}.
  }

  {\rm (b)} Then there exists $C=C(\ka)>0$ such that for any $(r,t)\in[0,\I)^2$,
  \EQQS{
  \int_{|t-r|}^{t+r}\frac{d\rho}{\LR{\rho}^{1+\ka}}
  \le\frac{C\min\{r,t\}}{\LR{t+r}\LR{t-r}^{\ka}}.
  }
\end{lem}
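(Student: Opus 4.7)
The plan is to evaluate both integrals explicitly and then apply elementary inequalities.

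For (a), I would first compute
\EQQS{
  \int_{|t-r|}^{t+r}\frac{d\rho}{\LR{\rho}}
  = \log\frac{1+t+r}{1+|t-r|}
  = \log\Big(1+\frac{2\min\{r,t\}}{1+|t-r|}\Big),
}
using the identity $1+t+r=(1+|t-r|)+2\min\{r,t\}$. Then I invoke the elementary inequality $\log(1+x)\le C_\de\, x^\de$, valid for all $x\ge 0$ and $0<\de\le 1$: the function $x^{-\de}\log(1+x)$ is continuous on $(0,\I)$ with finite limits at $0^+$ and at $\I$ (equal to $0$ at $\I$, and equal to $0$ if $\de<1$ or to $1$ if $\de=1$ at $0^+$), hence is bounded. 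Since $\min\{r,t\}^\de=\min\{r^\de,t^\de\}$, the desired inequality in (a) follows at once.

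For (b), direct integration yields
\EQQS{
  I:=\int_{|t-r|}^{t+r}\frac{d\rho}{\LR{\rho}^{1+\ka}}
  = \frac{1}{\ka}\Big[\LR{t-r}^{-\ka}-\LR{t+r}^{-\ka}\Big].
}
I would combine two elementary bounds on $I$: the trivial estimate $I\le 2\min\{r,t\}\LR{t-r}^{-1-\ka}$, from interval length times the maximum of the integrand, and the direct bound $I\le \ka^{-1}\LR{t-r}^{-\ka}$ obtained by dropping the negative second term. Splitting on whether $2\min\{r,t\}\le \LR{t-r}$ or $2\min\{r,t\}>\LR{t-r}$, one checks $\LR{t+r}\le 2\LR{t-r}$ in the first case (so the trivial bound already has the target shape $\min\{r,t\}/(\LR{t+r}\LR{t-r}^\ka)$ up to a constant) and $\LR{t+r}\le 4\min\{r,t\}$ in the second case (so multiplying the direct bound by $\min\{r,t\}/\LR{t+r}\ge 1/4$ produces the asserted form, with a constant depending only on $\ka$).

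The only non-routine observation is that neither of the two basic bounds in (b) has the correct dependence on $\LR{t+r}$ on its own; the dichotomy above is chosen precisely so that each bound is tight in the regime where the other is wasteful, and the two cases glue together with constants depending only on $\de$ or $\ka$ as permitted by the statement. No genuine obstacle is expected here.
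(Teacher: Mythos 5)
Your proof is correct: the identity $1+t+r=(1+|t-r|)+2\min\{r,t\}$ together with $\log(1+x)\le C_\de x^\de$ settles (a), and in (b) both elementary bounds and the dichotomy $2\min\{r,t\}\lessgtr\LR{t-r}$ (giving $\LR{t+r}\le 2\LR{t-r}$, respectively $\LR{t+r}\le 4\min\{r,t\}$) check out, with constants depending only on $\de$, respectively $\ka$. Note that the paper itself does not prove this lemma at all; it simply refers to Lemma 3.1 of Tsutaya \cite{T14}, so your argument supplies a complete, self-contained elementary proof where the paper relies on a citation, which is perfectly acceptable and arguably more transparent than invoking the reference.
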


\begin{proof}
  See Lemma 3.1 in \cite{T14}.
\end{proof}

The following lemma, which is a variant of (b) of Lemma \ref{lem_2.1}, is used to deduce the lower bound lifespan of the solution.

\begin{lem}\label{lem_2.2}
  Let $\ka>0$, $l\in\N\cup\{0\}$. Then there exists a positive constant $C=C(\ka,l)>0$ such that for any $(r,t)\in (0,\infty)\times[0,\I)$,
  \EQQS{
    \frac{1}{r}\int_{|t-r|}^{t+r}
      \frac{\{\log (2+\la)\}^l}{(1+\lambda)^{1+\ka}}d\la
    \le \frac{C\{\log(3+t)\}^l}{\LR{t+r} \LR{t-r}^{\ka}}.
  }
\end{lem}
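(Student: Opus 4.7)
The plan is to reduce the estimate to Lemma~\ref{lem_2.1}(b) by splitting into two regimes according to the relative size of $r$ and $t$.

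\emph{Case 1: $r \le 2t$.} In this regime $[|t-r|, t+r] \subset [0, 3t]$, so $\log(2+\la) \le \log(2+3t) \le C\log(3+t)$ uniformly on the interval, with $C$ an absolute constant. Pulling the factor $\{C\log(3+t)\}^l$ out of the integral and invoking Lemma~\ref{lem_2.1}(b), the bound $\min\{r,t\}/r \le 1$ absorbs the prefactor $1/r$ and yields
\EQQS{
  \frac{1}{r}\int_{|t-r|}^{t+r}\frac{\{\log(2+\la)\}^l}{(1+\la)^{1+\ka}}\,d\la
  \le \frac{C_l\,\{\log(3+t)\}^l}{\LR{t+r}\LR{t-r}^\ka}
}
at once.

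\emph{Case 2: $r > 2t$.} Now $\la \ge r - t \ge r/2$ throughout the integration interval, so $1+\la$, $\LR{t+r}$ and $\LR{t-r}$ are all comparable to $1+r$, while $\log(2+\la) \le \log(2+r+t) \le C\log(3+r)$. Bounding the integrand trivially by $C_l\{\log(3+r)\}^l(1+r)^{-1-\ka}$, using that the interval has length $2t$, and dividing by $r$, the claim reduces to the elementary inequality
\EQQS{
  t\,\{\log(3+r)\}^l \le C_l\, r\,\{\log(3+t)\}^l, \qquad r \ge 2t \ge 0.
}
Setting $\phi(x) := x/\{\log(3+x)\}^l$, this is $\phi(r) \ge c_l\,\phi(t)$. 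A short calculation shows $\phi'(x)$ has the sign of $(3+x)\log(3+x) - lx$, a function with at most two zeros, so $\phi$ has at most a single local maximum followed by a single local minimum $y_2 = y_2(l)$, after which $\phi$ increases monotonically to $\infty$. If $t \ge y_2$ the bound is immediate from monotonicity. If $t < y_2$, then $\phi(t)$ is bounded above by an $l$-dependent constant, while $\phi(r) \ge \min\{\phi(2t),\phi(y_2)\}$; here $\phi(y_2)$ is itself an $l$-dependent constant, and the elementary estimate $\log(3+2t) \le \log 2 + \log(3+t)$ gives $\phi(2t)/\phi(t) \ge 2(\log 3/\log 6)^l$, so one may take $c_l > 0$.

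The main obstacle is the elementary inequality in Case~2: the non-monotonicity of $\phi$ for large $l$ makes a one-line comparison fail and forces the brief case analysis above, but nothing deeper than calculus on one variable is required. Everything else is routine bookkeeping with Lemma~\ref{lem_2.1}(b).
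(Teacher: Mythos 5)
Your proof is correct, but in the main case it runs along a genuinely different track from the paper's. For $r\le 2t$ the two arguments are morally the same (on the integration range $\la\le 3t$, so the logarithm is already controlled by $\log(3+t)$ and Lemma \ref{lem_2.1}(b) finishes), except that the paper instead quotes Lemma 2.3 of \cite{K04}, which it also uses to dispose of the case $l=0$, whereas your argument treats $l=0$ uniformly. The real difference is in the region $r>2t$: the paper absorbs the logarithm into a small power loss, writing the numerator as $f(\la)(2+\la)^{\de}$ with $f(\rho)=\{\log(2+\rho)\}^l(2+\rho)^{-\de}$ and $0<\de<\min\{1,\ka\}$, bounding $f(\la)\lesssim f(t)$ on $[r-t,r+t]$ through a monotonicity analysis of $f$ (turning point $\rho_0=e^{l/\de}-2$ depending on $l$), integrating with the reduced exponent $\ka-\de$, and finally recovering the missing $\LR{r-t}^{\de}$ from $t^{1-\de}/r\lesssim\LR{r-t}^{-\de}$, a step that uses $r\ge1$ and hence forces the extra subcase $r\le1$. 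You instead bound the integrand by its supremum times the interval length $2t$, exploiting that $\la\sim r\sim\LR{t+r}\sim\LR{t-r}$ in this regime, and reduce the whole matter to the one-variable inequality $t\{\log(3+r)\}^l\lesssim r\{\log(3+t)\}^l$ for $r\ge2t$, settled by the calculus analysis of $\phi(x)=x/\{\log(3+x)\}^l$ — which, like the paper's $f$, is non-monotone with an $l$-dependent turning point, so the same kind of case split reappears, and your handling of it ($\phi(r)\ge\min\{\phi(2t),\phi(y_2)\}$, $\phi(2t)/\phi(t)\ge 2(\log3/\log6)^l$, $\phi(t)$ bounded for $t<y_2$) is sound. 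What your route buys is self-containedness: only Lemma \ref{lem_2.1}(b) and elementary calculus, no citation of \cite{K04}, no auxiliary exponent $\de$, and no separate case $r\le1$. What the paper's route buys is stylistic uniformity, since the same $\de$-loss mechanism is the engine of Propositions \ref{prop_potential} and \ref{prop_potential2}.
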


\begin{proof}
  Without loss of generality, we may assume that $l\in\N$ since the case $l=0$ is classical and treated in Lemma 2.3 in \cite{K04}.
  By Lemma 2.3 in \cite{K04}, we obtain
  \EQQS{
  \frac{1}{r}\int_{|r-t|}^{r+t}
    \frac{\{\log (2+\la)\}^l}{(1+\lambda)^{1+\ka}}d\la
  \lesssim\frac{\{\log(1+\LR{t-r})\}^l}{\LR{t+r} \LR{t-r}^{\ka}}.
  }
  So, our claim is clear when $r\le 2t$ or $r\le 1$.
  Now we assume that $r\ge 2t$ and $r\ge1$.
  Let $\de>0$ be such that $\de<\min\{1,\ka\}$.
  Put
  $$f(\rho):=\frac{\{\log(2+\rho)\}^l}{(2+\rho)^{\de}}$$
  for $\rho\ge0$.
  It is easy to see that $f(\rho)$ is increasing on $(0,\rho_0)$ and is decreasing in $(\rho_0,\I)$, where $\rho_0:=\max\{e^{l/\de}-2,0\}$.
  When $t\ge\rho_0$, we have
  \EQQS{
  \frac{1}{r}\int_{r-t}^{r+t}
    \frac{\{\log (2+\la)\}^l}{(1+\lambda)^{1+\ka}}d\la
  \lesssim \frac{f(t)}{r}\int_{r-t}^{r+t}
    \frac{d\la}{(1+\lambda)^{1+\ka-\de}}
  \lesssim \frac{f(t)t}{r\LR{t+r}\LR{r-t}^{\ka-\de}}
  }
  since $r-t\ge t$.
  On the other hand, when $t\le\rho_0$, it holds that
  $0<f(0)\le f(t)\le f(\rho_0)\le Cf(0)$, where $C=C(\ka,l)>0$.
  In particular, we have $f(\rho_0)\le Cf(t)$,
  which implies that
  \EQQS{
  \frac{1}{r}\int_{r-t}^{r+t}
    \frac{\{\log (2+\la)\}^l}{(1+\lambda)^{1+\ka}}d\la
  \lesssim \frac{f(\rho_0)}{r}\int_{r-t}^{r+t}
    \frac{d\la}{(1+\lambda)^{1+\ka-\de}}
  \lesssim \frac{f(t)t}{r\LR{t+r}\LR{r-t}^{\ka-\de}}.
  }
  Since $r\ge 2t$ and $r\ge 1$, we see that
  \EQQS{
    \frac{t}{(1+t)^\de r}
    \le\frac{t^{1-\de}}{r}
    \lesssim\frac{(1+r-t)^{1-\de}}{1+r-t}
    =\frac{1}{\LR{r-t}^{\de}},
  }
  which completes the proof.
\end{proof}

The following lemma is Lemma 2.2 in Kubo and Ohta \cite{KO05}

\begin{lem}
\label{lem:KO}
Let $\kappa\in \R$ and $C=2/\max\{\kappa,1\}$.
Then
\begin{equation}
\label{est:KO}
\int_{t-r}^{t+r}\frac{d\rho}{\rho^{1+\kappa}}\ge \frac{Cr}{(t+r)(t-r)^{\kappa}}
\end{equation}
holds for $t>r>0$.
\end{lem}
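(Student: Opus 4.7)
The plan is to reduce the claim to a one-variable inequality by explicit computation. Setting $a = t-r > 0$ and $b = t+r > 0$, so that $b - a = 2r$, the statement becomes
\[
\int_a^b \frac{d\rho}{\rho^{1+\kappa}} \ge \frac{b-a}{\max\{\kappa,1\}\cdot b\cdot a^{\kappa}}.
\]
First I would evaluate the integral in closed form: it equals $\log(b/a)$ when $\kappa = 0$, and $(a^{-\kappa} - b^{-\kappa})/\kappa$ otherwise. Normalizing by $y := a/b \in (0,1)$ removes the individual scales and leaves an elementary inequality involving only $y$ and $\kappa$.

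I would then split into three regimes. For $\kappa \ge 1$, multiplying through by $\kappa a^{\kappa}$ reduces the bound to $1 - y^{\kappa} \ge 1 - y$, which holds because $y^{\kappa} \le y$ on $(0,1)$ whenever $\kappa \ge 1$. For $0 < \kappa < 1$, after the same normalization the inequality reads $1 - y^{\kappa} \ge \kappa(1 - y)$; since the function $y \mapsto y^{\kappa}$ is concave on $(0,\infty)$, its graph lies below its tangent at $y = 1$, namely $1 - \kappa(1-y)$, which is exactly the required estimate. The degenerate case $\kappa = 0$ collapses to the familiar $-\log y \ge 1 - y$.

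Finally, for $\kappa < 0$ I would set $\alpha := -\kappa > 0$, at which point the claim rewrites as
\[
h(y) := 1 - (1+\alpha)y^{\alpha} + \alpha\, y^{\alpha+1} \ge 0 \qquad \text{for } y \in (0,1].
\]
A direct computation gives $h(1) = 0$ and $h'(y) = \alpha(1+\alpha)y^{\alpha-1}(y-1)$, which is nonpositive on $(0,1)$, so $h$ is decreasing with value $0$ at the endpoint $y=1$, hence $h \ge 0$ there. There is no genuine obstacle in this argument; everything is routine calculus. The only point worth keeping an eye on is the sign of $\kappa$ in the exponent and divisor, together with the observation that $\max\{\kappa,1\}$ collapses to $1$ in every regime with $\kappa \le 1$, which is what makes the three one-variable inequalities line up cleanly with a common constant.
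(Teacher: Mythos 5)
Your proof is correct, and every reduction checks out: with $a=t-r$, $b=t+r$, $2r=b-a$ and $y=a/b\in(0,1)$, the case $\kappa\ge1$ reduces to $y^{\kappa}\le y$, the case $0<\kappa\le 1$ (including the logarithmic case $\kappa=0$) to the tangent-line/concavity inequality $1-y^{\kappa}\ge\kappa(1-y)$ (resp.\ $-\log y\ge 1-y$), and the case $\kappa<0$ to $h(y)=1-(1+\alpha)y^{\alpha}+\alpha y^{\alpha+1}\ge0$ with $\alpha=-\kappa$, which follows from $h(1)=0$ and $h'(y)=\alpha(1+\alpha)y^{\alpha-1}(y-1)\le0$ on $(0,1)$; the constant $2/\max\{\kappa,1\}$ is matched exactly in each regime. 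Note, however, that the paper does not prove this lemma at all: it is quoted as Lemma 2.2 of Kubo and Ohta \cite{KO05}, so there is no in-paper argument to compare against. What your write-up adds is a short, self-contained, purely calculus-level verification valid for every real $\kappa$, which makes the statement independent of the external reference; the paper's citation route buys brevity, while yours buys completeness. The only stylistic caution is to state explicitly at the outset that $y=a/b<1$ precisely because $r>0$, so the strict inequalities $y^{\kappa}\le y$ and $h(y)\ge h(1)$ are applied on the correct interval.
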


\begin{lem}[Estimates for $W$]\label{lem4.2}
\begin{enumerate}
\item
  Let $T>0$, $\Phi \in C(\R^3)$ and $\phi \in C([0,\I))$.
  If the inequality $|\Phi(x)|\le \phi(|x|)$ holds for any $x\in\R^3$, then the estimate
  \EQS{
  |W(\Phi|x,t)|\le \frac{1}{2r}\int_{|r-t|}^{r+t}\la \phi(\la) d\la
  }
holds for any $(x,t)\in\R^3\times [0,T)$ with $r=|x|$.
\item
  Let $T>0$, $\Psi\in C(\R^3\times [0,T))$ and $\psi\in C([0,\I)\times [0,T))$. We assume that the estimate $|\Psi(x,t)|\le \psi(|x|,t)$ holds for any $(x,t)\in \R^3\times [0,T)$.
  Then the estimate
  \begin{equation}
  \label{3-4-1}
    \left|\int_0^t W\left(\Psi(\cdot,s)|x,t-s\right)ds\right|
  \le \frac{1}{2r}\iint_{D(r,t)}\la \psi(\la,s)d\la ds,
  \end{equation}
  holds for any $(x,t)\in\R^3\times [0,T)$ with $r=|x|$, where $D(r,t)$ is defined by
  \begin{equation}
\label{def:D}
 D(r,t):=\left\{(\la,s)\in[0,\I)^2\ :\ s\in [0,t], |r-t+s|\le\la\le r+t-s\right\}.
  \end{equation}
\end{enumerate}
\end{lem}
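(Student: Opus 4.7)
The proof plan is to reduce both statements to Lemma~\ref{lm:Planewave} (the plane wave formula) applied to the spherical averages defining $W$; no delicate analysis is needed, only careful book-keeping of the integration region in part~(ii).

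For part~(i), I would start from the definition \eqref{solop}, pull the absolute value inside the spherical integral, and use the hypothesis $|\Phi(x+t\omega)|\le \phi(|x+t\omega|)$ to obtain
\EQQS{
|W(\Phi|x,t)|\le \frac{t}{4\pi}\int_{|\omega|=1}\phi(|x+t\omega|)\,dS_{\omega}.
}
Since $\phi$ is a continuous function on $[0,\infty)$, I may apply Lemma~\ref{lm:Planewave} with $b=\phi$ and $\rho=t$, yielding
\EQQS{
\int_{|\omega|=1}\phi(|x+t\omega|)\,dS_{\omega}=\frac{2\pi}{rt}\int_{|t-r|}^{t+r}\lambda\,\phi(\lambda)\,d\lambda.
}
Multiplying by $t/(4\pi)$ produces the claimed bound $\frac{1}{2r}\int_{|r-t|}^{r+t}\lambda\phi(\lambda)\,d\lambda$. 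The only minor point is that the formula is stated for $\rho>0$; the case $t=0$ is trivial since $W(\Phi|x,0)=0$.

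For part~(ii), I would repeat the argument under the time integral. From the definition \eqref{L} together with the hypothesis $|\Psi(x,s)|\le \psi(|x|,s)$, I obtain
\EQQS{
\left|\int_0^t W(\Psi(\cdot,s)|x,t-s)\,ds\right|
\le \int_0^t \frac{t-s}{4\pi}\int_{|\omega|=1}\psi\bigl(|x+(t-s)\omega|,s\bigr)\,dS_{\omega}\,ds.
}
Applying Lemma~\ref{lm:Planewave} (now with $b(\lambda)=\psi(\lambda,s)$ and $\rho=t-s>0$ for $s\in[0,t)$) to the inner integral gives
\EQQS{
\int_0^t \frac{1}{2r}\int_{|r-(t-s)|}^{r+(t-s)}\lambda\,\psi(\lambda,s)\,d\lambda\,ds.
}
The only thing left is to recognize that, as $s$ ranges over $[0,t]$ and $\lambda$ ranges over $[|r-t+s|,r+t-s]$, the pair $(\lambda,s)$ traces out exactly the set $D(r,t)$ defined in \eqref{def:D}; reversing the order of integration then produces \eqref{3-4-1}.

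The argument is essentially mechanical, so there is no real obstacle; the only step that requires mild care is the identification of the integration region in part~(ii), and in particular checking that in the degenerate case $s=t$ the inner $\lambda$-integral collapses (since then $|r-t+s|=r=r+t-s$) and contributes nothing, so that restricting to $s\in[0,t)$ in Fubini costs nothing.
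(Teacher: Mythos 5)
Your proof is correct and follows exactly the route the paper intends: the paper omits the proof of this lemma, but its proof of the companion Lemma~\ref{lem:est-du} proceeds by the identical mechanism (bound the integrand, apply Lemma~\ref{lm:Planewave} with $\rho=t$ or $\rho=t-s$, and identify the resulting region with $D(r,t)$). Your handling of the degenerate cases $t=0$ and $s=t$ is a harmless refinement; nothing is missing.
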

The following lemma is useful to estimate the nonlinear term in the proof of Theorem \ref{thm_bu}.
\begin{lem}
\label{lem:est-du}
Let $\Psi\in C(\R^3\times[0,T))$ and $\psi \in C ([0,\infty)\times[0,T))$. Assume that
$\Psi(x,t)\ge \psi(|x|,t)\ge0$ for $(x,t)\in\R^3\times[0,T)$ holds. Then we have
\begin{equation}
\label{est-du}
L(\Psi)(x,t)\ge \frac{1}{2r} \iint_{D(r,t)}\lambda\psi(\lambda,s) d\lambda ds,
\end{equation}
where $r=|x|$ and $D(r,t)$ is the one in (\ref{def:D})
\end{lem}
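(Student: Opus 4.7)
The plan is to mirror the proof of Lemma \ref{lem4.2}(2), replacing the triangle inequality step by the monotonicity of the integral under the pointwise hypothesis $\Psi\ge\psi\circ|\cdot|\ge 0$. Concretely, I would start from the Duhamel representation
\EQQS{
L(\Psi)(x,t)=\int_0^t W(\Psi(\cdot,s)\,|\,x,t-s)\,ds
=\int_0^t\frac{t-s}{4\pi}\int_{|\om|=1}\Psi(x+(t-s)\om,s)\,dS_\om\,ds,
}
using the definitions \eqref{L} and \eqref{solop}.

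Next, I would invoke the hypothesis pointwise on the sphere: for every $\om$ with $|\om|=1$ and every $s\in[0,t]$, we have $\Psi(x+(t-s)\om,s)\ge\psi(|x+(t-s)\om|,s)\ge 0$. Because the integrand on the right is nonnegative, we may replace $\Psi$ by $\psi(|\cdot|,s)$ and preserve the inequality, obtaining
\EQQS{
L(\Psi)(x,t)\ge \int_0^t\frac{t-s}{4\pi}\int_{|\om|=1}\psi(|x+(t-s)\om|,s)\,dS_\om\,ds.
}
Now I apply Lemma \ref{lm:Planewave} with $b(\la)=\psi(\la,s)$ and $\rho=t-s$ to reduce the spherical mean to a radial integral; the factor $(t-s)$ outside cancels with the $\rho=t-s$ in the denominator of \eqref{Planewave}, leaving
\EQQS{
L(\Psi)(x,t)\ge \frac{1}{2r}\int_0^t\int_{|r-(t-s)|}^{r+(t-s)}\la\,\psi(\la,s)\,d\la\,ds.
}
Finally, by the very definition \eqref{def:D} of $D(r,t)$, the iterated integral on the right equals $\iint_{D(r,t)}\la\,\psi(\la,s)\,d\la\,ds$, which yields \eqref{est-du}.

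There is no real obstacle here; the only subtlety worth flagging is that, unlike in Lemma \ref{lem4.2}(2) where the absolute value automatically supplied nonnegativity, here the assumption $\psi\ge 0$ is what lets us pass the pointwise inequality $\Psi\ge\psi(|\cdot|,s)$ through the spherical integral and through the Planewave identity \eqref{Planewave} (whose right-hand side is guaranteed to be nonnegative when $b\ge 0$). All the identities used are equalities, so the inequality in the conclusion comes exclusively from this one pointwise replacement.
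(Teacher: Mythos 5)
Your argument is correct and coincides with the paper's own proof: both write out $L(\Psi)$ via \eqref{L} and \eqref{solop}, insert the pointwise bound $\Psi\ge\psi(|\cdot|,s)$, and apply Lemma \ref{lm:Planewave} with $\rho=t-s$ to land on the integral over $D(r,t)$. Your closing remark about $\psi\ge0$ is harmless but not essential for this step, since integrating a pointwise inequality against the nonnegative measure already preserves the inequality.
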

\begin{proof}
From the definition of $L$ in (\ref{L}), (\ref{solop}) and (\ref{Planewave}), we get
\[
\begin{array}{llll}
L(\Psi)(x,t)&=\d\frac{1}{4\pi}\int_{0}^{t}(t-s)\int_{|\omega|=1}\Psi(x+(t-s)\omega,s)dS_{\omega}ds\\
&\d\ge\frac{1}{4\pi}\int_{0}^{t}(t-s)\int_{|\omega|=1}\psi(|x+(t-s)\omega|,s)dS_{\omega}ds\\
&\d=\frac{1}{2r}\int_{0}^{t} \int_{|r-t+s|}^{r+t-s}\lambda \psi(\lambda,s)d\lambda ds.
\end{array}
\]
The proof is completed.
\end{proof}

\section{Lower bound of lifespan for $\ga=2$ and global existence for $2<\ga<3$}

\quad \ \ In this section, we show Theorems \ref{thm_lb} and \ref{thm_ge}.
The following lemma is the estimate for the free solution $u^0(x,t)$, which is defined by \eqref{u^0}.
For the proof, see (2.14) in \cite{K04}.

\begin{lem}[Estimates for free solutions]\label{lem_free}
  Let $\nu>0$ and $T>0$.
  Then there exists $C=C(\nu)>0$ such that
  \EQQS{
    \sup_{(x,t)\in\R^3\times[0,T)}
    \LR{t+|x|}\LR{t-|x|}^{\nu}|u^0(x,t)|\le C\|(u_0,u_1)\|_{Y(\nu)}
  }
  for any $(u_0,u_1)\in Y(\nu)$.
\end{lem}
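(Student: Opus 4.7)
The plan is to use the Kirchhoff representation $u^0(x,t)=W(u_1|x,t)+\partial_t W(u_0|x,t)$ and to bound the two pieces separately, in each case reducing the surface integral to a one-dimensional integral via Lemma \ref{lm:Planewave} and then invoking Lemma \ref{lem_2.1}(b). The definition of $Y(\nu)$ immediately supplies the pointwise bounds
\begin{equation*}
|u_0(y)|\le \|(u_0,u_1)\|_{Y(\nu)}\LR{y}^{-\nu},\quad |\nabla u_0(y)|,\,|u_1(y)|\le \|(u_0,u_1)\|_{Y(\nu)}\LR{y}^{-\nu-1},
\end{equation*}
that will feed the reductions.

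First, I would apply Lemma \ref{lm:Planewave} with $\rho=t$ and $b(\lambda)=\LR{\lambda}^{-\nu-1}$ to $W(u_1|x,t)=\tfrac{t}{4\pi}\int_{|\omega|=1}u_1(x+t\omega)\,dS_\omega$ to obtain
\begin{equation*}
|W(u_1|x,t)|\le \frac{\|(u_0,u_1)\|_{Y(\nu)}}{2r}\int_{|t-r|}^{t+r}\lambda\,\LR{\lambda}^{-\nu-1}\,d\lambda,
\end{equation*}
and then use $\lambda\le\LR{\lambda}$ together with Lemma \ref{lem_2.1}(b), combined with $\min\{r,t\}/r\le 1$, to produce the decay $\LR{t+r}^{-1}\LR{t-r}^{-\nu}$. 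For $\partial_t W(u_0|x,t)$ I would differentiate under the integral sign,
\begin{equation*}
\partial_t W(u_0|x,t)=\frac{1}{4\pi}\int_{|\omega|=1}u_0(x+t\omega)\,dS_\omega+\frac{t}{4\pi}\int_{|\omega|=1}\omega\cdot\nabla u_0(x+t\omega)\,dS_\omega,
\end{equation*}
and treat each summand by the same plane-wave reduction: the first with $b(\lambda)=\LR{\lambda}^{-\nu}$ and no prefactor $t$, the second with $b(\lambda)=\LR{\lambda}^{-\nu-1}$ exactly as for $W(u_1|x,t)$. Lemma \ref{lem_2.1}(b) would then close the estimate in both cases.

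The main obstacle is extracting simultaneously the two decay factors $\LR{t+r}^{-1}$ and $\LR{t-r}^{-\nu}$ from a weight that a priori yields only the power $\LR{\lambda}^{-\nu-1}$; this requires carefully distributing $\lambda\lesssim\LR{t+r}$ and $\min\{r,t\}/r\le 1$, and possibly splitting according to whether $(x,t)$ is close to the light cone or not. The degenerate case $r=0$ should be handled by continuity in $x$, or equivalently by the direct Kirchhoff formula $|W(u_1|0,t)|\le t\sup_{|y|=t}|u_1(y)|$. The whole computation is essentially the one carried out in (2.14) of \cite{K04}, so I would refer to that reference for the full details and only display the key structural steps above.
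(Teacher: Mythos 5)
Your overall toolbox (Kirchhoff formula, the plane-wave reduction of Lemma \ref{lm:Planewave}/\ref{lem4.2}, then Lemma \ref{lem_2.1}(b)) is the standard one, and it is essentially what underlies the reference (2.14) in \cite{K04} that the paper itself cites in lieu of a proof. However, the decisive step of your sketch fails as written. From $|W(u_1|x,t)|\le \frac{M}{2r}\int_{|t-r|}^{t+r}\lambda\LR{\lambda}^{-\nu-1}d\lambda$, bounding $\lambda\le\LR{\lambda}$ leaves the integrand $\LR{\lambda}^{-\nu}$, and Lemma \ref{lem_2.1}(b) then applies only with $1+\ka=\nu$, i.e. $\ka=\nu-1$ (so $\nu>1$ is needed), and it yields $\LR{t+r}^{-1}\LR{t-r}^{-(\nu-1)}$ — one full power of $\LR{t-r}$ short of the claimed $\LR{t-r}^{-\nu}$. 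This loss cannot be repaired by a finer splitting near the light cone, because the inequality you are trying to prove is false with $C=C(\nu)$ independent of $T$ (and the lemma is invoked with $T=\I$ in Theorem \ref{thm_ge}): take $u_0\equiv0$ and $u_1(y)=\LR{y}^{-\nu-1}$, which has $\|(u_0,u_1)\|_{Y(\nu)}=1$; then $u^0(0,t)=t(1+t)^{-\nu-1}$, so $\LR{t}\LR{t}^{\nu}|u^0(0,t)|=t\to\I$. What is actually true (and what \cite{K04} provides, and what the paper uses, since the $X_\ga$-weight carries $\LR{t-|x|}^{(3-\ga)/2}$ while the data lie in $Y((5-\ga)/2)$) is the estimate with $\LR{t-|x|}^{\nu-1}$ for $(u_0,u_1)\in Y(\nu)$, $\nu>1$ — equivalently $\LR{t-|x|}^{\nu}$ for data in $Y(\nu+1)$. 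The statement as printed contains an off-by-one slip, and your sketch asserts the unprovable version at exactly the point where the bookkeeping should have detected it.

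A secondary gap: even for the corrected exponent, the term $\frac{1}{4\pi}\int_{|\omega|=1}u_0(x+t\omega)dS_\omega$ is not closed by "take $b(\lambda)=\LR{\lambda}^{-\nu}$ and apply Lemma \ref{lem_2.1}(b)". After the plane-wave reduction this term is $\frac{M}{2rt}\int_{|t-r|}^{t+r}\lambda\LR{\lambda}^{-\nu}d\lambda$, and the crude bound $\lambda\le\LR{\lambda}$ produces $\LR{\lambda}^{1-\nu}$, to which Lemma \ref{lem_2.1}(b) applies only when $\nu>2$; this is outside the relevant range $\nu=(5-\ga)/2\in(1,3/2]$. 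Here you must exploit the prefactor $1/(2rt)$ together with the $\min\{r,t\}$ gain of Lemma \ref{lem_2.1}(b) (or split into the regions $t\le 2r$ and $t\ge 2r$) to recover the factor $\LR{t+r}^{-1}$. In short, the architecture of your argument is right, but as written it neither proves the literal statement (which needs the corrected exponent $\nu-1$, or data in $Y(\nu+1)$) nor handles the $u_0$-average term within the admissible range of $\nu$.
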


\subsection{Multilinear estimates}

\quad \ \ The main estimates of the present paper are the following two propositions.

\begin{prop}\label{prop_potential}
  Let $\ga=2$ and $T>0$.
  Then there exists $C>0$ such that
  \EQQS{
    |(V_2*(u_1 u_2))(y,s)|
    \le\frac{C(1+\log\LR{s+|y|})}{\LR{s+|y|}^{7/4}\LR{s-|y|}^{1/4}}\|u_1\|_{X}\|u_2\|_{X}
  }
  for any $u_1,u_2\in X_2(T)$ and $(y,s)\in\R^3\times[0,T)$.
\end{prop}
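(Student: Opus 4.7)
The plan is first to dominate $|u_1 u_2|$ pointwise by a radial function and then reduce the 3D convolution to a 1D iterated integral via the plane-wave identity. Since $u_1, u_2 \in X_2(T)$, the pointwise bound $|u_1(z,s)u_2(z,s)| \le \|u_1\|_X\|u_2\|_X\,h(|z|)$ holds with $h(\la):=1/(\LR{s+\la}^2\LR{s-\la})$. Introducing spherical coordinates $z=y+\rho\omega$ centered at $y$ (so $|y-z|=\rho$ and $dz=\rho^2\,d\rho\,dS_\omega$), the $\rho^2$ cancels $|y-z|^{-2}=\rho^{-2}$, and applying Lemma~\ref{lm:Planewave} with $r:=|y|>0$ to the remaining sphere integral yields
\EQQS{
|(V_2*(u_1u_2))(y,s)| \le \frac{2\pi\,\|u_1\|_X\|u_2\|_X}{r} \int_0^\I \frac{d\rho}{\rho} \int_{|\rho-r|}^{\rho+r} \frac{\la\,d\la}{\LR{s+\la}^2\LR{s-\la}}.
}
(The degenerate case $r=0$ follows by a direct radial computation, and the regime $\LR{s+r}\lesssim 1$ is immediate.) From here on, everything reduces to bounding this iterated integral.

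Second, I would split the analysis into two regimes depending on the position of $(y,s)$. In the \emph{away regime}, where $s\ge 2r$ or $r\ge 2s$, one has $\LR{s+r}\sim\LR{s-r}$, so the target simplifies to $C(1+\log\LR{s+r})/\LR{s+r}^2$. The approach is to bound $\la/\LR{s+\la}^2\le 1/\LR{s+\la}$, substitute $\mu=s-\la$ so that the inner integral becomes $\int_I d\mu/(\LR{2s-\mu}\LR{\mu})$ on an interval of length $2\min(\rho,r)$ centered at $s-\max(\rho,r)$, and apply Lemma~\ref{lem_2.1}(a) with $\de=1$. After splitting the outer $\rho$-integration into three sub-regions ($\rho\lesssim\max(s,r)$, $\rho\sim\max(s,r)$, $\rho\gtrsim\max(s,r)$), the single $\log\LR{s+r}$ appears from the middle sub-region through the $d\rho/\rho$ weight, and summing gives the desired bound.

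The \emph{close regime} $r/2\le s\le 2r$ is the main technical obstacle: here the logarithmic singularity of $\log\tfrac{r+\la}{|r-\la|}$ at $\la=r$ and the pole of $\LR{s-\la}^{-1}$ at $\la=s$ lie within distance $d:=|s-r|$ of each other. Writing $M:=\LR{s+r}\sim s\sim r$, the key move, as anticipated in the Introduction, is to apply Lemma~\ref{lem_2.1}(a) with $\de=1/4$ in place of $\de=1$; after the same substitution this produces a factor of $C\min(\rho,r)^{1/4}/\LR{s-r}^{1/4}$ on the critical sub-region $\rho\sim M$. Integrating against $d\rho/\rho$ over $\rho\sim M$ contributes exactly one $\log M$, and combined with the prefactor $1/r\sim 1/M$ one obtains the claimed decay $M^{-7/4}\LR{d}^{-1/4}$; outside $\rho\sim M$, the factor $\log\tfrac{r+\la}{|r-\la|}$ is controlled either by a constant or by $r/\la$, and its contribution is subsumed as in the away regime. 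The crucial feature of the choice $\de=1/4$ is that it produces the pairing $(a,b)=(7/4,1/4)$ with $0<b<1/2$, matching the constraint required to close the subsequent Duhamel estimate (Proposition~\ref{prop_duhamel}): larger $\de$ would push $b\ge 1/2$ and break the iteration, while smaller $\de$ would fail to absorb enough of the logarithmic factor into the $\LR{s-r}^{-1/4}$ weight.
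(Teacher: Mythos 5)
Your reduction via Lemma \ref{lm:Planewave} centered at $y$ is fine (it even avoids the paper's splitting into $|y-z|\le 1/2$ and $|y-z|\ge 1/2$), and the away-from-the-cone regime can be closed along the lines you describe. The genuine gap is in the close regime, which is the whole point of the proposition. In your ordering the inner integral is $\int_{|\rho-r|}^{\rho+r}\la\LR{s+\la}^{-2}\LR{s-\la}^{-1}d\la$, whose singular factor sits at $\la=s$. Any Lemma \ref{lem_2.1}(a)--type bound (after $\mu=s-\la$; note also that the lemma as stated applies to intervals $[|t-r|,t+r]\subset[0,\I)$, while your $\mu$-interval may contain negative values) yields (half-length)$^{\de}$ divided by $\LR{\mathrm{dist}}^{\de}$, where the distance is that from $s$ to $[|\rho-r|,\rho+r]$. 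That distance is $0$ as soon as $\rho\ge|s-r|$, which covers essentially all of your ``critical sub-region $\rho\sim M$'' unless $|s-r|\sim M$ (i.e.\ you are back in the away regime). So the claimed factor $\min(\rho,r)^{1/4}/\LR{s-r}^{1/4}$ is false there: for such $\rho$ the inner integral is genuinely of size $\log(2+\min(\rho,r))/\LR{s+r}$, with no decay in $\LR{s-r}$. The bookkeeping is also inconsistent: $\int_{\rho\sim M}d\rho/\rho=O(1)$, not $\log M$, and if your claimed inner bound were true you would obtain the estimate with no logarithm at all. Moreover one cannot simply discard the issue: the crude route (inner integral $\lesssim\log(2+\rho)/\LR{s+r}$, then $\int_1^{CM}d\rho/\rho$) gives $(\log M)^2/M^2$ with $M:=\LR{s+r}$, which fails to be $\lesssim (\log M)/(M^{7/4}\LR{s-r}^{1/4})$ when $\LR{s-r}\gtrsim M/(\log M)^4$; so the $\LR{s-r}^{-1/4}$ gain must genuinely be produced, and your sketch does not produce it.

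The paper avoids this by applying the plane-wave identity in the opposite order: polar coordinates about the origin, with the radial variable $\eta=|z|$ outside and the kernel integral $\int_{|\la-\eta|}^{\la+\eta}\rho\LR{\rho}^{-2}d\rho$ inside (here $\la=|y|$). Then Lemma \ref{lem_2.1}(a) with $\de=1/4$ produces $\LR{\la-\eta}^{-1/4}$, singular at $\eta=\la$, and the desired $\LR{s-\la}^{-1/4}$ is extracted from the separation of the two one-dimensional singularities $\eta=s$ (from the weight) and $\eta=\la$ (from the kernel), e.g.\ via $\LR{\la-\eta}\ge\LR{\la-s}$ for $\eta\le s$, or splittings such as $\LR{s-\eta}\ge\LR{s-\la}^{1/4}\LR{\la-\eta}^{3/4}$; the single logarithm then comes from integrating the remaining first-power singularity. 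If you prefer to keep your ordering, the workable route is to perform the $\rho$-integration exactly, which gives the weight $\log\frac{\la+r}{|\la-r|}$ you allude to, and then to estimate $\frac1r\int_0^\I\la\LR{s+\la}^{-2}\LR{s-\la}^{-1}\log\frac{\la+r}{|\la-r|}\,d\la$ by exploiting the distance $|s-r|$ between the logarithmic singularity at $\la=r$ and the pole at $\la=s$, absorbing logarithms with an inequality of the type $\log(e\LR{s+r}/\LR{s-r})\lesssim(\LR{s+r}/\LR{s-r})^{1/4}$. Either way, the close-regime step as you wrote it does not go through and needs to be replaced.
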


\begin{prop}\label{prop_potential2}
  Let $2<\ga<3$ and $T>0$.
  Then there exists $C=C(\ga)>0$ such that
  \EQQS{
    |(V_\ga*(u_1 u_2))(y,s)|
    \le\frac{C\|u_1\|_{X}\|u_2\|_{X}}{\LR{s+|y|}^{(5+\ga)/4}\LR{s-|y|}^{(3-\ga)/4}}
  }
  for any $u_1,u_2\in X_\ga(T)$ and $(y,s)\in\R^3\times[0,T)$.
\end{prop}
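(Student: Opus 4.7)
The plan is to pass to spherical coordinates centered at $y$ and apply the pointwise bound
\[
|u_1(x,s)\,u_2(x,s)|\le \frac{M}{\LR{s+|x|}^2\LR{s-|x|}^{3-\ga}},\quad M:=\|u_1\|_{X_\ga(T)}\|u_2\|_{X_\ga(T)}.
\]
Setting $r=|y|$, combining Lemma~\ref{lm:Planewave} with Fubini and the explicit $\rho$-integration yields
\[
|V_\ga*(u_1u_2)(y,s)| \le \frac{C(\ga)\,M}{r}\int_0^\infty \frac{\la\bigl[|r-\la|^{2-\ga}-(r+\la)^{2-\ga}\bigr]}{\LR{s+\la}^2\LR{s-\la}^{3-\ga}}\,d\la.
\]
Writing the bracket as $(\ga-2)\int_{|r-\la|}^{r+\la}\rho^{1-\ga}\,d\rho$, I would use the twin bounds
\[
|r-\la|^{2-\ga}-(r+\la)^{2-\ga}\le C(\ga)\min\bigl(|r-\la|^{2-\ga},\,\min(r,\la)\,|r-\la|^{1-\ga}\bigr);
\]
the first is integrable near $\la=r$ (since $2-\ga>-1$), while the second---crucial whenever $\la$ is far from $r$---follows from the monotonicity of $\rho^{1-\ga}$. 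It is essential to keep both, since the cruder bound alone wastes decay in the $\la\gg r$ regime.

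I would then partition by the position of $(y,s)$ relative to the light cone: the far regimes $s\ge 2r$ and $r\ge 2s$, and the near-cone regime $r/2\le s\le 2r$. In the far regimes, $\LR{s+r}$ and $\LR{s-r}$ are both comparable to $\max(s,r)$, so the target reduces to $C\LR{s+r}^{-2}$. This is obtained by decomposing the $\la$-integral at $\la\sim r$ and $\la\sim s$, applying the trivial bracket bound in a neighborhood of $\la=r$ and the sharper bound elsewhere, factoring out the dominant weights, and applying Lemma~\ref{lem_2.1}(b) to the residual pieces.

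The principal obstacle is the near-cone regime, where $r\sim s$ and $\LR{s-r}$ may be small. Splitting at $\la=r/2$ and $\la=3r/2$, the outer pieces again give $C\LR{s+r}^{-2}$ by the same method. In the critical middle piece both $|r-\la|^{2-\ga}$ (near-singular at $\la=r$) and $\LR{s-\la}^{-(3-\ga)}$ (near-singular at $\la=s$) can be large simultaneously; each singularity is individually integrable because $2-\ga>-1$ and $3-\ga<1$, but the sharp factor $\LR{s-r}^{-(3-\ga)/4}$ must be extracted from their combined behavior. Following the strategy outlined in the introduction for Proposition~\ref{prop_potential}, I would invoke Lemma~\ref{lem_2.1}(a) with a balanced choice of exponent, trading a $\LR{s+r}^{(3-\ga)/4}$ portion of the $\LR{s+r}^{-2}$ decay against the $\LR{s-r}$ singularity. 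Crucially, since $\ga>2$ strictly, this trade-off closes without any logarithmic loss---the key qualitative difference from the doubly critical case $\ga=2$ treated in Proposition~\ref{prop_potential}.
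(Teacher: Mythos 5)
Your overall strategy coincides with the paper's: the same pointwise bound on $u_1u_2$, reduction to a one-dimensional radial integral via Lemma \ref{lm:Planewave}, a case analysis according to the position of $(y,s)$ relative to the light cone, and, near the cone, trading a $\LR{s+|y|}^{(3-\ga)/4}$ portion of the decay for the factor $\LR{s-|y|}^{-(3-\ga)/4}$. The only structural difference is cosmetic: you integrate the homogeneous kernel $\rho^{1-\ga}$ explicitly and keep two bounds for the resulting bracket, whereas the paper first splits off the ball $|y-z|\le 1/2$ (where $\ga<3$ gives local integrability) and then applies (b) of Lemma \ref{lem_2.1} with $\ka=\ga-2$ to the bracket kernel $\LR{\rho}^{-\ga}$; both routes lead to essentially the same radial integrand, and your twin bounds for $|r-\la|^{2-\ga}-(r+\la)^{2-\ga}$ (with $r=|y|$) are correct.

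At the decisive step, however, your sketch has a gap. After the explicit $\rho$-integration there is no integral of the form $\int_{|t-r|}^{t+r}\LR{\rho}^{-1}d\rho$ left anywhere, so (a) of Lemma \ref{lem_2.1} --- the tool tailored to the kernel $\LR{\rho}^{-1}$, i.e.\ to $\ga=2$ and Proposition \ref{prop_potential} --- does not apply to anything in your formulation. What actually has to be controlled in the near-cone middle piece is an integral of the type $\int_{r/2}^{3r/2}|r-\la|^{2-\ga}\LR{s-\la}^{-(3-\ga)}d\la$ with $s\sim r$: two singularities, at $\la=r$ and $\la=s$, whose exponents $(\ga-2)$ and $(3-\ga)$ sum to exactly one, which is precisely where a logarithm threatens and where the real content of the proposition lies; your proposal names the goal (sacrifice $\LR{s+r}^{(3-\ga)/4}$) but not the mechanism. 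The paper handles this with the interpolation inequalities built on the exponent $\th$ of \eqref{def_theta}, chosen so that $3-\ga+\th(\ga-2)<1$ while $(1-\th)(\ga-2)\le(3-\ga)/4$. In your setup one can instead split the $\la$-integral at the two singular points, which produces a bound of size $1+\log(\LR{s+r}/\LR{s-r})$ and then absorb the logarithm into $(\LR{s+r}/\LR{s-r})^{(3-\ga)/4}$; either way this must be carried out explicitly (similarly, (b) of Lemma \ref{lem_2.1} does not literally apply to your residual $\la$-integrals in the far regimes, though those are elementary). Note also that the absence of a logarithmic loss is not due to $\ga>2$ alone, as you assert: $\ga>2$ is what makes the $\rho$-integral converge and yields the gain $\LR{\cdot}^{-(\ga-2)}$, while suppressing the borderline logarithm near the cone uses the slack $(3-\ga)/4>0$, i.e.\ $\ga<3$ together with the $\LR{s+r}$-decay you agreed to give up.
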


\begin{proof}[Proof of Proposition \ref{prop_potential}]
  We follow the argument of Proposition 2.2 in \cite{T14}.
  As stated in Introduction, we give up earning the full power $\LR{s+|y|}^{-2}$ since we need to eliminate a logarithmic growth in (ii) of Case 1 and (ii) of Case 3.
  For the sake of the simplicity, we put $M:=\|u_1\|_{X}\|u_2\|_{X}$.
  Set
  \EQQS{
    &|(V_2*(u_1 u_2))(y,s)|\\
    &\le M\int_{\R^3}|y-z|^{-2}\LR{s+|z|}^{-2}\LR{s-|z|}^{-1}dz\\
    &\le M\Biggr(\int_{|y-z|\le1/2}+\int_{|y-z|\ge1/2}\Biggr)|y-z|^{-2}\LR{s+|z|}^{-2}\LR{s-|z|}^{-1}dz
    =:A+B.
  }
  It is easy to see that $1+s+|z|\ge (1+s+|y|)/2$ and $1+|s-|z||\ge (1+|s-|y||)/2$ when $|y-z|\le 1/2$.
  Then, we have
  \EQQS{
    A\lesssim\frac{M}{\LR{s+\la}^2\LR{s-\la}}\int_{|y-z|\le 1/2}\frac{dz}{|y-z|^{2}}
    \lesssim \frac{M}{\LR{s+\la}^2\LR{s-\la}},
  }
  where $\la:=|y|$.
  If $|y-z|\ge1/2$, we see that $|y-z|\ge (1+|y-z|)/3$.
  So, we have
  \EQQS{
    B
    &\lesssim M\int_{|y-z|\ge 1/2}
      \frac{dz}{\LR{y-z}^2\LR{s+|z|}^2\LR{s-|z|}}\\
    &\lesssim\frac{M}{\la}
      \int_0^\I \frac{\eta}{\LR{s+\eta}^2\LR{s-\eta}}
      \int_{|\la-\eta|}^{\la+\eta}\frac{\rho}{\LR{\rho}^2}d\rho d\eta,
  }
  where we used Lemma \ref{lm:Planewave} in the last inequality.
  We devide the proof into the three cases where
  $\{(\la,s)\in[0,\infty)\times[0,T);\la\ge 1, \la\ge s\}$,
  $\{(\la,s)\in[0,\infty)\times[0,T);s\le\la\le 1\}$ and
  $\{(\la,s)\in[0,\infty)\times[0,T);\la\le s\}$.\\
  {\bf Case 1.} $\la\ge 1$ and $\la\ge s$.
  Set
  \EQQS{
    B
    &\lesssim\frac{M}{\la}\Biggr(\int_0^s+\int_s^\la+\int_\la^\I\Biggr)
    \frac{\eta}{\LR{s+\eta}^2\LR{s-\eta}}
    \int_{|\la-\eta|}^{\la+\eta}\frac{\rho}{\LR{\rho}^2}d\rho d\eta\\
    &=:B_1+B_2+B_3.
  }
  We further devide the proof into two cases where $\la\ge 2s$ and $s\le \la\le 2s$.\\
  (i) $\la\ge 2s$.
  We use (a) of Lemma \ref{lem_2.1} with $\de=1$ to evaluate $B_1$.
  Observe that
  \EQQS{
    B_1
    &\lesssim\frac{M}{\la}\int_0^s
     \frac{\eta^2}{\LR{s+\eta}^2\LR{s-\eta}\LR{\la-\eta}}d\eta\\
    &\lesssim\frac{M}{\LR{s+\la}^2}
     \int_0^s\frac{d\eta}{\LR{s-\eta}}
    \lesssim\frac{\log\LR{s+\la}}{\LR{s+\la}^2}M
  }
  since $\la-\eta\ge\la-s\ge \la/2$.
  Next, we estimate $B_2$.
  Note that $\eta\le(s+\la)/2$ implies $\la-\eta\ge(\la-s)/2\ge\la/4$, and $\eta\ge(s+\la)/2$ implies $\eta-s\ge (\la-s)/2\ge\la/4$. This tohether (a) of Lemma \ref{lem_2.1} with $\de=1$ shows that
  \EQQS{
    B_2
    &\lesssim\frac{M}{\LR{\la}}
    \Biggr(\int_s^{(s+\la)/2}+\int_{(s+\la)/2}^\la\Biggr)
    \frac{\eta^2}{\LR{s+\eta}^2 \LR{\eta-s} \LR{\la-\eta}}d\eta\\
    &\lesssim\frac{M}{\LR{s+\la}^2}
      \int_s^{(s+\la)/2}\frac{d\eta}{\LR{\eta-s}}
    +\frac{M}{\LR{s+\la}^{2}}
      \int_{(s+\la)/2}^\la\frac{d\eta}{\LR{\la-\eta}}\\
    &\lesssim\frac{\log\LR{s+\la}}{\LR{s+\la}^2}M.
  }
  In order to estimate $B_3$, we use (a) of Lemma \ref{lem_2.1} with $\de=1$, and we have
  \EQQS{
    B_3
    &\lesssim\frac{M}{\LR{s+\la}}
      \Biggr(\int_\la^{2\la}+\int_{2\la}^\I\Biggr)
      \frac{\eta\la}{\LR{s+\eta}^2 \LR{\eta-s} \LR{\eta-\la}}d\eta\\
    &\lesssim\frac{M}{\LR{s+\la}^{2}}
      \int_\la^{2\la}\frac{d\eta}{\LR{\eta-\la}}
     +\frac{M}{\LR{s+\la}}\int_{2\la}^\I
      \frac{d\eta}{\LR{\eta-\la}^2}
    \lesssim\frac{1+\log\LR{s+\la}}{\LR{s+\la}^2}M
  }
  since $\eta-s\ge\la-s\ge\la/2$ when $\eta\ge \la$.\\
  (ii) $s\le \la\le 2s$.
  We use (a) of Lemma \ref{lem_2.1} with $\de=1/4$ to estimate $B_1,B_2$ and $B_3$.
  Observe that
  \EQQS{
    B_1
    &\lesssim\frac{M}{\la}\int_0^s
      \frac{\eta^{5/4}}{\LR{s+\eta}^2\LR{s-\eta}\LR{\la-\eta}^{1/4}}d\eta\\
    &\lesssim\frac{M}{\LR{s+\la}^{7/4}\LR{\la-s}^{1/4}}
      \int_0^s\frac{d\eta}{\LR{s-\eta}}
    \lesssim \frac{\log\LR{s+\la}}{\LR{s+\la}^{7/4}\LR{\la-s}^{1/4}}M.
  }
  Similarly, we obtain
  \EQQS{
    B_2
    &\lesssim\frac{M}{\LR{s+\la}^{7/4}}
      \Biggr(\int_s^{(s+\la)/2}+\int_{(s+\la)/2}^\la\Biggr)\frac{d\eta}{\LR{\eta-s}\LR{\la-\eta}^{1/4}}\\
    &\lesssim\frac{M}{\LR{s+\la}^{7/4}\LR{\la-s}^{1/4}}
      \int_s^{(s+\la)/2}\frac{d\eta}{\LR{\eta-s}}\\
    &\quad+\frac{M}{\LR{s+\la}^{7/4}\LR{\la-s}}
      \int_{(s+\la)/2}^\la\frac{d\eta}{\LR{\la-\eta}^{1/4}}
    \lesssim \frac{1+\log\LR{s+\la}}{\LR{s+\la}^{7/4}\LR{\la-s}^{1/4}}M.
  }
  On the other hand, we note that $2\la-s\ge\la$.
  It then follows that
  \EQQS{
    B_3
    &\lesssim\frac{M}{\la}\Biggr(\int_\la^{2\la-s}+\int_{2\la-s}^\I\Biggr)
      \frac{\eta\la^{1/4}}{\LR{s+\eta}^2\LR{\eta-s}\LR{\eta-\la}^{1/4}}d\eta\\
    &\lesssim\frac{M}{\LR{s+\la}^{7/4}\LR{\la-s}}
      \int_\la^{2\la-s}\frac{d\eta}{\LR{\eta-\la}^{1/4}}
     +\frac{M}{\LR{s+\la}^{7/4}}
      \int_{2\la-s}^\I\frac{d\eta}{\LR{\eta-\la}^{5/4}}\\
    &\lesssim\frac{M}{\LR{s+\la}^{7/4}\LR{\la-s}^{1/4}}.
  }
  This completes the proof for the case $\la\ge1$ and $\la\ge s$.\\
  {\bf Case 2.} $s\le\la\le1$.
  In this case we can obtain the desired inequality easily.
  Indeed, we see from (a) of Lemma \ref{lem_2.1} with $\de=1$ that
  \EQQS{
    B
    &\lesssim\frac{M}{\la}
      \int_0^\I \frac{\eta\min\{\eta,\la\}}{\LR{s+\eta}^2 \LR{s-\eta} \LR{\la-\eta}}d\eta\\
    &\lesssim M\int_0^{2s}d\eta
      +M\int_{2s}^{\I}\frac{d\eta}{\LR{s+\eta}^2}
    \lesssim M
    \lesssim\frac{M}{\LR{s+\la}^2}.
  }
  Here we used $\eta-s\ge\eta/2$ when $\eta\ge2s$.\\
  {\bf Case 3.} $s\ge\la$.
  As in the Case 1, we devide the integral into three pieses: set
  \EQQS{
    B
    &\lesssim\frac{M}{\la}\Biggr(\int_0^\la+\int_\la^s+\int_s^\I\Biggr)
    \frac{\eta}{\LR{s+\eta}^2\LR{s-\eta}}
    \int_{|\la-\eta|}^{\la+\eta}\frac{\rho}{\LR{\rho}^2}d\rho d\eta\\
    &=:B_1+B_2+B_3.
  }
  We also further devide the proof into two cases where $s\ge 2\la$ and $\la\le s\le 2\la$.\\
  (i) $s\ge 2\la$.
  We see from (a) of Lemma \ref{lem_2.1} with $\de=1$ that
  \EQQS{
    B_1
    &\lesssim\frac{M}{\la}
      \int_0^\la\frac{\eta^{2}}{\LR{s+\eta}^2 \LR{s-\eta} \LR{\la-\eta}}d\eta\\
    &\lesssim\frac{M}{\LR{s+\la}^{2}}
      \int_0^\la\frac{d\eta}{\LR{\la-\eta}}
    \lesssim\frac{\log\LR{s+\la}}{\LR{s+\la}^{2}}M.
  }
  Next, by (a) of Lemma \ref{lem_2.1} with $\de=1$, we have
  \EQQS{
    B_2
    &\lesssim M \Biggr(\int_\la^{(s+\la)/2}+\int_{(s+\la)/2}^s\Biggr)
      \frac{\eta}{\LR{s+\eta}^2 \LR{s-\eta} \LR{\eta-\la}}d\eta\\
    &\lesssim \frac{M}{\LR{s+\la}^2} \int_\la^{(s+\la)/2}
      \frac{d\eta}{\LR{\eta-\la}}
     +\frac{M}{\LR{s+\la}^2} \int_{(s+\la)/2}^s
      \frac{d\eta}{\LR{s-\eta}}\\
    &\lesssim\frac{\log\LR{s+\la}}{\LR{s+\la}^{2}}M
  }
  since $s-\eta\ge(s-\la)/2\ge s/4$ when $\eta\le (s+\la)/2$, and $\eta-\la\ge(s-\la)/2\ge s/4$ when $\eta\ge (s+\la)/2$.
  For $B_3$, we also (a) of use Lemma \ref{lem_2.1} with $\de=1$ so that
  \EQQS{
    B_3
    &\lesssim M \Biggr(\int_s^{2s}+\int_{2s}^\I\Biggr)
      \frac{\eta}{\LR{s+\eta}^2 \LR{\eta-s} \LR{\eta-\la}}d\eta\\
    &\lesssim \frac{M}{\LR{s+\la}^2}
      \int_s^{2s}\frac{d\eta}{\LR{\eta-s}}
     +M \int_{2s}^\I \frac{d\eta}{\LR{\eta-s}^3}
    \lesssim\frac{1+\log\LR{s+\la}}{\LR{s+\la}^{2}}M
  }
  since $\eta-\la\ge s-\la\ge s/2$ and $\eta-\la\ge\eta/2$ when $\eta\ge s$.\\
  (ii) $\la\le s\le 2\la$.
  We may assume that $\la\ge 1$, otherwise the proof is identical with Case 2.
  We use (a) of Lemma \ref{lem_2.1} with $\de=1/4$ to estimate $B_1,B_2$ and $B_3$.
  Observe that
  \EQQS{
    B_1
    &\lesssim\frac{M}{\la}\int_0^\la
      \frac{\eta^{5/4}}{\LR{s+\eta}^2 \LR{s-\eta} \LR{\la-\eta}^{1/4}}d\eta\\
    &\lesssim\frac{M}{\LR{s+\la}^{7/4} \LR{s-\la}^{1/4}}
      \int_0^\la \frac{d\eta}{\LR{\la-\eta}}
    \lesssim\frac{\log\LR{s+\la}}{\LR{s+\la}^{7/4} \LR{s-\la}^{1/4}}M.
  }
  Here, we used $\LR{s-\eta}=\LR{s-\eta}^{1/4}\LR{s-\eta}^{3/4}\ge\LR{s-\la}^{1/4}\LR{\la-\eta}^{3/4}$ for $\eta\le\la$.
  Similarly, we have
  \EQQS{
    B_2
    &\lesssim\frac{M}{\LR{s+\la}^{7/4}}
      \Biggr(\int_\la^{(s+\la)/2}+\int_{(s+\la)/2}^s\Biggr)\frac{d\eta}{\LR{s-\eta}\LR{\eta-\la}^{1/4}}\\
    &\lesssim\frac{M}{\LR{s+\la}^{7/4}\LR{s-\la}^{1/4}}
      \int_\la^{(s+\la)/2}\frac{d\eta}{\LR{\eta-\la}}\\
    &\quad+\frac{M}{\LR{s+\la}^{7/4}\LR{s-\la}^{1/4}}
      \int_{(s+\la)/2}^s\frac{d\eta}{\LR{s-\eta}}
    \lesssim \frac{1+\log\LR{s+\la}}{\LR{s+\la}^{7/4}\LR{\la-s}^{1/4}}M.
  }
  Finally, we see that
  \EQQS{
    B_3
    &\lesssim\frac{M}{\la}\Biggr(\int_s^{2s-\la}+\int_{2s-\la}^\I\Biggr)
      \frac{\eta\la^{1/4}}{\LR{s+\eta}^2\LR{\eta-s}\LR{\eta-\la}^{1/4}}d\eta\\
    &\lesssim\frac{M}{\LR{s+\la}^{7/4}\LR{s-\la}^{1/4}}
      \int_s^{2s-\la}\frac{d\eta}{\LR{\eta-s}}
     +\frac{M}{\LR{s+\la}^{7/4}}
      \int_{2s-\la}^\I\frac{d\eta}{\LR{\eta-s}^{5/4}}\\
    &\lesssim\frac{1+\log\LR{s+\la}}{\LR{s+\la}^{7/4}\LR{s-\la}^{1/4}}M.
  }
  since $\la\ge1$, $2s-\la\ge s$ and $\eta-\la\ge\eta-s$.
  This concludes the proof.
\end{proof}

Next, we show Proposition \ref{prop_potential2}.
The proof is similar to that of Proposition \ref{prop_potential}.
Similarly to Proposition \ref{prop_potential}, estimates from (ii) of Case 1 and (ii) of Case 3 are the worst.

\begin{proof}[Proof of Proposition \ref{prop_potential2}]
  Put $M:=\|u_1\|_{X}\|u_2\|_{X}$.
  We set
  \EQQS{
    &|(V_\ga*(u_1 u_2))(y,s)|\\
    &\le M\int_{\R^3}|y-z|^{-\ga}\LR{s+|z|}^{-2}\LR{s-|z|}^{-(3-\ga)}dz\\
    &\le M\Biggr(\int_{|y-z|\le1/2}+\int_{|y-z|\ge1/2}\Biggr)|y-z|^{-\ga}\LR{s+|z|}^{-2}\LR{s-|z|}^{-(3-\ga)}dz
    =:A+B.
  }
  As in the proof of Proposition \ref{prop_potential}, we can obtain $A\lesssim \LR{s+\la}^{-2}\LR{s-\la}^{-(3-\ga)}M$, where $\la:=|y|$.
  If $|y-z|\ge1/2$, we also have
  \EQQS{
    B
    &\lesssim\frac{M}{\la}\int_0^\I\frac{\eta}{\LR{s+\eta}^2\LR{s-\eta}^{3-\ga}}\int_{|\la-\eta|}^{\la+\eta}\frac{\rho}{\LR{\rho}^\ga}d\rho d\eta\\
    &\lesssim\frac{M}{\la}
      \int_0^\I \frac{\eta\min\{\eta,\la\}}{\LR{s+\eta}^2\LR{s-\eta}^{3-\ga}\LR{\la+\eta}\LR{\la-\eta}^{\ga-2}}d\eta,
  }
  where we used (b) of Lemma \ref{lem_2.1} in the last inequality since $\ga-2>0$.
  As in the proof of Proposition \ref{prop_potential}, we devide the proof into the three cases where
  $\{(\la,s)\in[0,\infty)\times[0,T);\la\ge 1, \la\ge s\}$,
  $\{(\la,s)\in[0,\infty)\times[0,T);s\le\la\le 1\}$ and
  $\{(\la,s)\in[0,\infty)\times[0,T);\la\le s\}$.\\
  {\bf Case 1.} $\la\ge 1$ and $\la\ge s$.
  Set
  \EQQS{
    B
    &\lesssim\frac{M}{\la}\Biggr(\int_0^s+\int_s^\la+\int_\la^\I\Biggr)
    \frac{\eta\min\{\eta,\la\}}{\LR{s+\eta}^2\LR{s-\eta}^{3-\ga}\LR{\la+\eta}\LR{\la-\eta}^{\ga-2}}d\eta\\
    &=:B_1+B_2+B_3.
  }
  We further devide the proof into two cases where $\la\ge 2s$ and $s\le \la\le 2s$.\\
  (i) $\la\ge 2s$.
  It is easy to see that
  \EQQS{
    B_1
    \lesssim\frac{M}{\LR{s+\la}^2\LR{\la-s}^{\ga-2}}
      \int_0^s\frac{d\eta}{\LR{s-\eta}^{3-\ga}}
    \lesssim\frac{M}{\LR{s+\la}^2}
  }
  since $0\le s\le\la-s$.
  Next, we see that
  \EQQS{
    B_2
    &\lesssim\frac{M}{\LR{s+\la}^2}
    \Biggr(\int_s^{(s+\la)/2}+\int_{(s+\la)/2}^\la\Biggr)
    \frac{d\eta}{\LR{\eta-s}^{3-\ga} \LR{\la-\eta}^{\ga-2}}\\
    &\lesssim\frac{M}{\LR{s+\la}^2 \LR{\la-s}^{\ga-2}}
      \int_s^{(s+\la)/2}\frac{d\eta}{\LR{\eta-s}^{3-\ga}}\\
    &\quad+\frac{M}{\LR{s+\la}^{2} \LR{\la-s}^{3-\ga}}
      \int_{(s+\la)/2}^\la\frac{d\eta}{\LR{\la-\eta}^{\ga-2}}
    \lesssim\frac{M}{\LR{s+\la}^2}.
  }
  On the other hand, it holds that
  \EQQS{
    B_3
    &\le M
      \Biggr(\int_\la^{2\la}+\int_{2\la}^\I\Biggr)
      \frac{\eta}{\LR{s+\eta}^2 \LR{\eta-s}^{3-\ga} \LR{\la+\eta} \LR{\eta-\la}^{\ga-2}}d\eta\\
    &\lesssim\frac{M}{\LR{s+\la}^{5-\ga}}
      \int_\la^{2\la}\frac{d\eta}{\LR{\eta-\la}^{\ga-2}}
     +\frac{M}{\LR{s+\la}}\int_{2\la}^\I
      \frac{d\eta}{\LR{\eta-\la}^2}
    \lesssim\frac{M}{\LR{s+\la}^2}
  }
  since $\eta-s\ge\la-s\ge\la/2$ when $\eta\ge\la$.
  For the second term, we used $\eta-s\ge\eta-\la$ and $s+\eta\ge \eta-\la$.\\
  (ii) $s\le \la\le 2s$.
  Put
  \EQS{\label{def_theta}
    \th:=\max\Biggr\{\frac{5\ga-11}{4(\ga-2)},0\Biggr\}.
  }
  Note that $0\le\th<1$, and $3-\ga+\th(\ga-2)<1$ since $\ga>2$.
  Using $\LR{\la-\eta}\ge \LR{s-\eta}^{\th} \LR{\la-\eta}^{1-\th}\ge \LR{s-\eta}^{\th} \LR{\la-s}^{1-\th}$ for $0\le \eta\le s$, we have
  \EQQS{
    B_1
    &\lesssim\frac{M}{\LR{s+\la}^2}\int_0^s\frac{d\eta}{\LR{s-\eta}^{3-\ga} \LR{\la-\eta}^{\ga-2}}\\
    &\le\frac{M}{\LR{s+\la}^2 \LR{\la-s}^{(1-\th)(\ga-2)}}\int_0^s\frac{d\eta}{\LR{s-\eta}^{3-\ga+\th(\ga-2)}}\\
    &\lesssim\frac{M\LR{s}^{(1-\th)(\ga-2)-(3-\ga)/4}}{\LR{s+\la}^{(5+\ga)/4}\LR{\la-s}^{(1-\th)(\ga-2)}}
    \lesssim\frac{M}{\LR{s+\la}^{(5+\ga)/4}\LR{\la-s}^{(3-\ga)/4}}
  }
  since $(1-\th)(\ga-2)-(3-\ga)/4\le 0$ and $0\le\la-s\le s$.
  On the other hand, we can estimate $B_2$ by the same way as in Case 1 (i).
  Next, we evaluate $B_3$:
  \EQQS{
  B_3
  \lesssim\frac{M}{\LR{s+\la}}
    \Biggr(\int_\la^{2\la}+\int_{2\la}^\I\Biggr)
    \frac{\eta\la}{\LR{s+\eta}^2 \LR{\eta-s}^{3-\ga} \LR{\la+\eta} \LR{\eta-\la}^{\ga-2}}d\eta.
  }
  The second term in the right hand side can be estimated by the same way as in Case 1 (i).
  Thus, we focus on the first term, which we denote by $B_{31}$.
  Using $\LR{\eta-s}\ge \LR{\eta-s}^{1/4}\LR{\eta-\la}^{3/4}\ge\LR{\la-s}^{1/4}\LR{\eta-\la}^{3/4}$ for $\eta\ge\la$,
  we have
  \EQQS{
  B_{31}
  &\lesssim\frac{M}{\LR{s+\la}^2}
    \int_\la^{2\la}
    \frac{d\eta}{\LR{\eta-s}^{3-\ga} \LR{\eta-\la}^{\ga-2}}\\
  &\le\frac{M}{\LR{s+\la}^2 \LR{\la-s}^{(3-\ga)/4}}
    \int_\la^{2\la}\frac{d\eta}{\LR{\eta-\la}^{(\ga+1)/4}}\\
  &\lesssim\frac{M}{\LR{s+\la}^{(5+\ga)/4}\LR{\la-s}^{(3-\ga)/4}}
  }
  since $(\ga+1)/4<1$.\\
  {\bf Case 2} $s\le\la\le1$.
  The proof is identical with that of Proposition \ref{prop_potential}.\\
  {\bf Case 3} $s\ge\la$.
  As in the Case 1, we devide the integral into three pieses: set
  \EQQS{
    B
    &\lesssim\frac{M}{\la}\Biggr(\int_0^\la+\int_\la^s+\int_s^\I\Biggr)
    \frac{\eta\min\{\eta,\la\}}{\LR{s+\eta}^2\LR{s-\eta}^{3-\ga}\LR{\la+\eta}\LR{\la-\eta}^{\ga-2}}d\eta\\
    &=:B_1+B_2+B_3.
  }
  We also further devide the proof into two cases where $s\ge 2\la$ and $\la\le s\le 2\la$.\\
  (i) $s\ge 2\la$.
  It is easy to see that
  \EQQS{
    B_1
    \lesssim\frac{M}{\LR{s+\la}^2 \LR{s-\la}^{3-\ga}}
      \int_0^\la\frac{d\eta}{\LR{\la-\eta}^{\ga-2}}
    \lesssim\frac{M}{\LR{s+\la}^2}.
  }
  since $\la\le s-\la$.
  On the other hand,
  \EQQS{
    B_2
    &\lesssim\frac{M}{\LR{s+\la}^2}
    \Biggr(\int_\la^{(s+\la)/2}+\int_{(s+\la)/2}^s\Biggr)
    \frac{d\eta}{\LR{s-\eta}^{3-\ga} \LR{\eta-\la}^{\ga-2}}\\
    &\lesssim\frac{M}{\LR{s+\la}^2 \LR{s-\la}^{3-\ga}}
      \int_\la^{(s+\la)/2}\frac{d\eta}{\LR{\eta-\la}^{\ga-2}}\\
    &\quad+\frac{M}{\LR{s+\la}^{2} \LR{s-\la}^{\ga-2}}
      \int_{(s+\la)/2}^s\frac{d\eta}{\LR{s-\eta}^{3-\ga}}
    \lesssim\frac{M}{\LR{s+\la}^2}.
  }
  Next, we see that
  \EQQS{
    B_3
    &\lesssim M
      \Biggr(\int_s^{2s}+\int_{2s}^\I\Biggr)
      \frac{\eta}{\LR{s+\eta}^2 \LR{\eta-s}^{3-\ga} \LR{\la+\eta} \LR{\eta-\la}^{\ga-2}}d\eta\\
    &\lesssim\frac{M}{\LR{s+\la}^{\ga}}
      \int_s^{2s}\frac{d\eta}{\LR{\eta-s}^{3-\ga}}
     +\frac{M}{\LR{s+\la}}\int_{2s}^\I
      \frac{d\eta}{\LR{\eta-s}^2}
    \lesssim\frac{M}{\LR{s+\la}^2}
  }
  since $\eta-\la\ge s-\la\ge s/2$.
  For the second term, we used $\eta-\la\ge\eta-s$ and $s+\eta\ge \eta-s$.\\
  (ii) $\la\le s\le 2\la$.
  We may assume that $\la\ge 1$, otherwise the proof is identical with Case 2.
  Using $\LR{s-\eta}\ge \LR{s-\eta}^{1/4}\LR{\la-\eta}^{3/4}\ge \LR{s-\la}^{1/4}\LR{\la-\eta}^{3/4}$ for $0\le \eta\le \la$,
  we obtain
  \EQQS{
    B_1
    &\lesssim\frac{M}{\LR{s+\la}^2}
      \int_0^\la\frac{d\eta}{\LR{s-\eta}^{3-\ga}\LR{\la-\eta}^{\ga-2}}\\
    &\lesssim\frac{M}{\LR{s+\la}^2\LR{s-\la}^{(3-\ga)/4}}
      \int_0^\la\frac{d\eta}{\LR{\la-\eta}^{(\ga+1)/4}}\\
    &\lesssim\frac{M}{\LR{s+\la}^{(5+\ga)/4}\LR{s-\la}^{(3-\ga)/4}}
  }
  since $(\ga+1)/4<1$.
  On the other hand, we can estimate $B_2$ by the same way as in Case 3 (i).
  Next, we evaluate $B_3$:
  \EQQS{
  B_3
  \lesssim M
    \Biggr(\int_s^{2s}+\int_{2s}^\I\Biggr)
    \frac{\eta}{\LR{s+\eta}^2 \LR{\eta-s}^{3-\ga} \LR{\la+\eta} \LR{\eta-\la}^{\ga-2}}d\eta.
  }
  The second term in the right hand side can be estimated by the same way as in Case 3 (i).
  Thus, we focus on the first term, which we denote by $B_{31}$.
  Using $\LR{\eta-\la}\ge \LR{\eta-\la}^{1-\th}\LR{\eta-s}^\th \ge \LR{s-\la}^{1-\th}\LR{\eta-s}^\th$ for $s\le \eta\le 2s$ ($\th$ is defined by \eqref{def_theta}),
  we obtain
  \EQQS{
    B_{31}
    &\lesssim\frac{M}{\LR{s+\la}^2}\int_s^{2s}\frac{d\eta}{\LR{\eta-s}^{3-\ga}\LR{\eta-\la}^{\ga-2}}\\
    &\lesssim\frac{M}{\LR{s+\la}^2 \LR{s-\la}^{(1-\th)(\ga-2)}}
      \int_s^{2s}\frac{d\eta}{\LR{\eta-s}^{3-\ga+\th(\ga-2)}}\\
    &\lesssim\frac{M\LR{s}^{(1-\th)(\ga-2)-(3-\ga)/4}}{\LR{s+\la}^{(5+\ga)/4} \LR{s-\la}^{(1-\th)(\ga-2)}}
    \lesssim\frac{M}{\LR{s+\la}^{(5+\ga)/4} \LR{s-\la}^{(3-\ga)/4}}
  }
  since $3-\ga+\th(\ga-2)<1$.
  This completes the proof.
\end{proof}

\subsection{Proof of Theorems \ref{thm_lb} and \ref{thm_ge}}

\quad \ \ Using Propositions \ref{prop_potential} and \ref{prop_potential2}, we obtain the estimate for the Duhamel term in \eqref{IE_u}.

\begin{prop}\label{prop_duhamel}
Let $2\le\ga<3$, $T>0$ and $L$ be the integral operator on $C(\R^3\times[0,T))$ given by {\rm (\ref{L})}. Then there exists a positive constant $C>0$ such that
\EQQ{
  \|L((V*(u_1u_2))u_3)\|_{X}
  \le C D_\ga(T)\prod_{i=1}^3\|u_i\|_{X}
}
for any $u_1,u_2,u_3\in X_\ga(T)$, where $D_\ga(T)$ is defined by
\EQQS{
  D_\ga(T):=
  \begin{cases}
    1+\log(3+T),&\ga=2\\
    1,&2<\ga<3.
  \end{cases}
}
\end{prop}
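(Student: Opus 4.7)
The plan is to combine Lemma \ref{lem4.2}(2) with the pointwise bounds of Propositions \ref{prop_potential} and \ref{prop_potential2}, then change variables to $\al=\la+s$, $\be=\la-s$ and close the estimate with Lemma \ref{lem_2.2}.

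Fix $(x,t)\in\R^3\times[0,T)$ with $r=|x|>0$ (the case $r=0$ follows by continuity) and set $M:=\prod_{i=1}^{3}\|u_i\|_X$. The pointwise bound $|u_3(y,s)|\le\|u_3\|_X\LR{s+|y|}^{-1}\LR{s-|y|}^{-(3-\ga)/2}$ combined with Proposition \ref{prop_potential} (when $\ga=2$) or Proposition \ref{prop_potential2} (when $2<\ga<3$) produces a radial majorant $\psi(|y|,s)$ of $|(V_\ga*u_1u_2)u_3|(y,s)$ of the form
\EQQS{
\psi(\la,s)=CM\,(1+\log\LR{s+\la})^{l_\ga}\LR{s+\la}^{-a}\LR{s-\la}^{-p},
}
with $(a,p,l_\ga)=(11/4,3/4,1)$ if $\ga=2$ and $(a,p,l_\ga)=((9+\ga)/4,\,3(3-\ga)/4,\,0)$ if $2<\ga<3$. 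In both cases one readily checks that $0<p<1$ and $a-(2-p)=(5-\ga)/2$. Lemma \ref{lem4.2}(2) then yields
\EQQS{
|L((V_\ga*u_1u_2)u_3)(x,t)|\le\frac{1}{2r}\iint_{D(r,t)}\la\,\psi(\la,s)\,d\la\,ds=:I.
}

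The change of variables $\al=\la+s$, $\be=\la-s$ has Jacobian $1/2$ and carries $D(r,t)$ onto $\{(\al,\be):\,|t-r|\le\al\le t+r,\ r-t\le\be\le\al\}$, with $\la=(\al+\be)/2$. Since $|r-t|\le\al$ and $0<p<1$, the explicit antiderivative of $\LR{\be}^{-p}$ gives after a short computation
\EQQS{
\int_{r-t}^{\al}(\al+\be)\LR{\be}^{-p}\,d\be\le C\LR{\al}^{2-p}.
}
Substituting this into $I$ and invoking the identity $a-(2-p)=(5-\ga)/2$ reduces the estimate to
\EQQS{
I\le\frac{CM}{r}\int_{|t-r|}^{t+r}\frac{(1+\log\LR{\al})^{l_\ga}}{\LR{\al}^{(5-\ga)/2}}\,d\al,
}
and Lemma \ref{lem_2.2} (applied with $\ka=(3-\ga)/2>0$ and $l=l_\ga$, so that $1+\ka=(5-\ga)/2$) bounds the right-hand side by $CD_\ga(T)\,M\,\LR{t+r}^{-1}\LR{t-r}^{-(3-\ga)/2}$. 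Multiplying by $\LR{t+r}\LR{t-r}^{(3-\ga)/2}$ and taking the supremum over $(x,t)$ then yields the claim.

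The main technical point is the inner $\be$-integral. The factor $\al+\be=2\la$ can be as large as $2\al$, but because $p<1$ the weight $\LR{\be}^{-p}$ absorbs this growth to produce exactly $\LR{\al}^{2-p}$; any larger $p$ would destroy this balance. The precise matching $a-(2-p)=(5-\ga)/2$ between the exponents coming from the potential estimates and the exponent demanded by Lemma \ref{lem_2.2} is the quantitative reason the argument closes with only the logarithmic factor $\log(3+T)$ in the critical case $\ga=2$, and with no logarithmic loss when $2<\ga<3$.
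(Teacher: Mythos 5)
Your proposal is correct and follows essentially the same route as the paper's proof: the same pointwise bounds from Propositions \ref{prop_potential} and \ref{prop_potential2}, Lemma \ref{lem4.2}, the change of variables $\al=\la+s$, $\be=\la-s$, and Lemma \ref{lem_2.2} with $\ka=(3-\ga)/2$. The only cosmetic difference is that you keep the factor $\la=(\al+\be)/2$ inside the $\be$-integral and absorb it there, whereas the paper bounds $\la\le\LR{\al}$ first; the exponent bookkeeping $a-(2-p)=(5-\ga)/2$ is identical.
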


\begin{proof}
  For the sake of simplicity, we put $M:=\prod_{i=1}^3\|u_i\|_{X}$.
  Let $l(x)$ be a function on $\R$ such that $l(2)=1$ and $l(x)=0$ for $x\neq2$.
  We see from Propositions \ref{prop_potential} and \ref{prop_potential2} that
  \EQQS{
    |((V_\ga*(u_1u_2))u_3)(x,t)|
    \lesssim \frac{1+(\log\LR{t+|x|})^{l(\ga)}}{\LR{t+|x|}^{(9+\ga)/4} \LR{t-|x|}^{3(3-\ga)/4}}M
  }
  for any $(x,t)\in\R^3\times[0,T)$.
  Put $r:=|x|$.
  Then, Lemma \ref{lem4.2} shows that
  \EQQS{
    |L((V_\ga*(u_1u_2))u_3)(x,t)|
    \lesssim\frac{M}{r}\iint_{D(r,t)}
      \frac{\la(1+(\log\LR{s+\la})^{l(\ga)})}{\LR{s+\la}^{(9+\ga)/4} \LR{s-\la}^{3(3-\ga)/4}}dsd\la.
  }
  Changing of veriables $\al:=\la+s$ and $\be:=\la-s$, we have
  \EQQS{
    |L((V_\ga*(u_1u_2))u_3)(x,t)|
    &\lesssim\frac{M}{r}
      \int_{|r-t|}^{r+t}\frac{1+(\log\LR{\al})^{l(\ga)}}{\LR{\al}^{(5+\ga)/4}}
      \int_{r-t}^\al\frac{d\be}{\LR{\be}^{3(3-\ga)/4}}d\al\\
    &\lesssim\frac{M}{r}
      \int_{|r-t|}^{r+t}\frac{1+(\log\LR{\al})^{l(\ga)}}{\LR{\al}^{(5-\ga)/2}}d\al\\
    &\lesssim\frac{1+(\log(3+T))^{l(\ga)}}{\LR{t+r}\LR{t-r}^{(3-\ga)/2}}M
  }
  since $3(3-\ga)/4<1$ for $2\le\ga<3$.
  Here, we applied Lemma \ref{lem_2.2} in the last inequality,
  which completes the proof.
\end{proof}

We are ready to prove Theorem \ref{thm_ge}.

\begin{proof}[Proof of Theorem \ref{thm_ge}]
  Let $(u_0,u_1)\in Y((5-\ga)/2)$ and set $M:=\|(u_0,u_1)\|_{Y((5-\ga)/2)}$.
  Let $C_0,C_1$ be defined by Lemma \ref{lem_free}, Proposition \ref{prop_duhamel}, respectively.
  For $\e>0$, we put
  $$X(\e):=\{u\in C(\R^3\times[0,\infty));\|u\|_{X_\ga(\I)}\le 2C_0M\e\}.$$
  It is easy to check $X(\e)$ is complete with the norm $\|\cdot\|_{X_\ga(\I)}$.
  We define the map from $X(\e)$ to $C(\R^3\times[0,\I))$ by
  \EQQS{
    \Phi[v](x,t)=\e u^0(x,t)+L((V_\ga*v^2)v)(x,t).
  }
  Let $\e_0>0$ be such that $2^4C_0^2C_1M^2\e_0^2\le 1$.
  Then, for any $\e\in(0,\e_0]$, we see from Lemma \ref{lem_free} and Proposition \ref{prop_duhamel} that
  \EQQS{
    \|\Phi[u]\|_{X_\ga(\I)}
    &\le C_0\|(u_0,u_1)\|_{Y((3-\ga)/2)}
      +C_1\|u\|_{X_\ga(\I)}^3\le 2C_0M\e,\\
    \|\Phi[u]-\Phi[v]\|_{X_\ga(\I)}
    &\le C_1(\|u\|_{X_\ga(\I)}^2+\|v\|_{X_\ga(\I)}^2)
      \|u-v\|_{X_\ga(\I)}\\
    &\le\frac{1}{2}\|u-v\|_{X_\ga(\I)}
  }
  for $u,v\in X(\e)$.
  So, the map $\Phi$ is a contraction on $X(\e)$, so we obtain the unique solution in $X(\e)$ for $\e\in(0,\e_0]$.
\end{proof}

\begin{proof}[Proof of Theorem \ref{thm_lb}]
  Following the argument of Theorem 1.3 in \cite{KS},
  we show that for sufficiently small $\e>0$ if $T>0$ satisfies $T\le \exp(C\e^{-2})$, then we can construct the local solution $u\in X_2(T)$ to \eqref{IE_u} with $(\e u_0,\e u_1)$.

  Let $(u_0,u_1)\in Y(3/2)$ and set $M:=\|(u_0,u_1)\|_{Y(3/2)}$.
  Let $C_0,C_1$ be defined by Lemma \ref{lem_free}, Proposition \ref{prop_duhamel}, respectively.
  Let $X(\e)$ be a subspace of $X_2(T)$ defined by
  \EQQS{
    X(\e):=\{u\in C(\R^3\times[0,T));\|u\|_{X_2(T)}\le 2C_0M\e\},
  }
  where $\e>0$ and $T>0$ will be chosen later.
  We claim that if
  \EQS{\label{eq3.1}
    2^5C_1 C_2^2 M^2\e^2 \log(3+T)\le 1,
  }
  then the following sequence $\{U_n\}_{n\in\N}\subset X(\e)$ is Cauchy in $X(\e)$:
  $$U_1=\e u^0,\quad U_{n+1}=\e u^0+L((V_2*U_n^2)U_n),\quad n\ge1.$$
  Indeed, similarly to the proof of Theorem \ref{thm_ge}, Lemma \ref{lem_free} and Proposition \ref{prop_duhamel} show $\|U_{n+1}\|_{X_2(T)}\le \e C_0M+C_1 D(T)\|U_n\|_{X_2(T)}^3$.
  Then, we can conclude that $U_n\in X(\e)$ for any $n\in\N$ by the induction on $n$ since $1\le \log(3+T)$.
  We also have for $n,m\in\N$ satisfying $n>m$
  \EQQS{
    \|U_{n+1}-U_{m+1}\|_{X_2(T)}
    &\le k \|U_{n}-U_{m}\|_{X_2(T)}
    \le k^{m}\|U_{n-m+1}-U_1\|_{X_2(T)}\\
    &\to0
  }
  as $n,m\to\I$, where $k=2^4C_1 C_2^2 M^2\e^2 \log(3+T)<1$.
  Take $\e_0>0$ so that $2^5C_1 C_2^2 M^2\e_0^2 \log6\le 1$.
  For $\e\in(0,\e_0]$, let $T>0$ be such that $2^5C_1 C_2^2 M^2\e_0^2 \log(2T)\le 1$.
  Then \eqref{eq3.1} holds, i.e.,
  \EQQS{
    &2^5C_1 C_2^2 M^2\e^2 \log(3+T)\\
    &\le 2^5C_1 C_2^2 M^2\e^2\log\Bigg(3+\frac{1}{2}\exp(2^{-5}C_1^{-1}C_2^{-2}M^{-2}\e^{-2})\Bigg)\le 1.
  }
  Therefore, we obtain $T(\e)\ge \exp(C\e^{-2})$,
  which completes the proof.
\end{proof}

\section{Upper bound of the lifespan}

In this section, we prove Theorem \ref{thm_bu}. The proof is shown by the iteration argument
by \cite{J79}.
First of all, we state the positivity of solutions for (\ref{IE_u}) under the condition
(\ref{blow-up-asm}).
\begin{lem}
\label{lem:positive}
Suppose that the assumptions in Theorem \ref{thm_bu} are fulfilled.
Let $T>0$ and let $u\in C(\R^3\times[0,T))$ be the solution of (\ref{IE_u}).
Then we have $u(x,t)>0$ for $(x,t)\in \R^3\times[0,T)$.
\end{lem}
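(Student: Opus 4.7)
The plan is to exploit the positivity-preserving structure of every operation appearing in the integral equation (\ref{IE_u}) and propagate positivity from $u_1$ through the Picard iteration used to construct $u$. Since $u_0 \equiv 0$, the free part reduces to $u^0(x,t) = W(u_1|x,t) = \frac{t}{4\pi}\int_{|\omega|=1} u_1(x+t\omega)\,dS_\omega$ by (\ref{u^0}) and (\ref{solop}). Under hypothesis (\ref{blow-up-asm}), $u_1$ is non-negative (strictly positive on $\{|y|\ge 1\}$), so $u^0(x,t) \ge 0$ on $\R^3 \times [0,T)$, with strict positivity whenever $t > 0$ and the sphere of radius $t$ around $x$ meets $\{|y| \ge 1\}$ in a set of positive surface measure; in particular whenever $t+|x|\ge 1$ and $t>0$.

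Next I would realize $u$ as the uniform-on-compacts limit of the Picard iterates $U_1 := u^0$ and $U_{n+1} := u^0 + L((V_2 * U_n^2) U_n)$, which converge in $X_2(T)$ by the contraction argument of the proof of Theorem \ref{thm_lb}. I induct on $n$ to show $U_n \ge 0$: the base case is the previous paragraph. For the inductive step, if $U_n \ge 0$, then $U_n^2 \ge 0$, and the pointwise convolution $V_2 * U_n^2$ is non-negative because the kernel $V_2(x) = |x|^{-2}$ is strictly positive; multiplying by $U_n \ge 0$ and applying $L$, which by (\ref{L}) and (\ref{solop}) integrates the non-negative weight $(t-s)/(4\pi)$ against the values of its argument on spheres, gives $L((V_2 * U_n^2) U_n) \ge 0$, so $U_{n+1} \ge u^0 \ge 0$. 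Passing to the limit yields $u \ge u^0 \ge 0$ on $\R^3 \times [0,T)$.

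Finally, strict positivity for $t > 0$ follows from $u \ge u^0$ together with the first paragraph: wherever the sphere of radius $t$ around $x$ meets $\{u_1 > 0\}$ non-trivially, $u^0(x,t) > 0$ and hence $u(x,t) > 0$. The residual region $\{|x| + t < 1\}$ is handled by a second appeal to the integral equation: once $u \not\equiv 0$ somewhere in the backward cone from $(x,t)$, the nonlinear term $L((V_2 * u^2) u)$ at $(x,t)$ is strictly positive, since the integrand $V_2 * u^2$ is the integral of a non-negative continuous function against the strictly positive kernel $V_2$. I expect this strict-positivity step in the small region to be the only mildly delicate point; it is essentially cosmetic for the application in Theorem \ref{thm_bu}, which only uses positivity on $\{t - |x| \ge 1\}$.
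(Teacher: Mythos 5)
Your overall idea --- positivity propagates because every kernel appearing in (\ref{IE_u}) is nonnegative --- is the right one (the paper in fact omits the proof and only cites Keller's comparison argument), but your implementation has a genuine gap. You represent the given solution $u$ as the limit of the nonlinear Picard iterates $U_{n+1}=\e u^0+L((V_2*U_n^2)U_n)$, ``which converge in $X_2(T)$ by the contraction argument of the proof of Theorem \ref{thm_lb}''. That contraction is available only under the smallness condition (\ref{eq3.1}), i.e.\ for $T$ at most of order $\exp(c\e^{-2})$, and only inside the ball $X(\e)$ of $X_2(T)$; Lemma \ref{lem:positive}, however, must apply to an arbitrary continuous solution on an arbitrary $[0,T)$ --- in the proof of Theorem \ref{thm_bu} it is invoked precisely for $T$ as large as in (\ref{T_asm_1}), where no convergence of the iterates is known and the solution is not even assumed to lie in $X_2(T)$. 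So the key identification $u=\lim U_n$ is unjustified exactly in the regime where the lemma is needed. The standard repair (and in essence what the Keller-type comparison amounts to) is to freeze the potential: set $h:=V_2*u^2\ge 0$, which is nonnegative regardless of the sign of $u$, so that $u$ solves the \emph{linear} Volterra equation $u=u^0+L(hu)$ with a given continuous $h\ge0$. On any fixed backward cone, $L$ only sees values of $hu$ inside that cone, and iterating the linear equation gives $u=\sum_{n\ge0}\bigl(L(h\,\cdot)\bigr)^n u^0$ plus a remainder $\bigl(L(h\,\cdot)\bigr)^N u$ that tends to zero by the factorial gain of the repeated time integration; every term of the series is nonnegative, hence $u\ge u^0\ge0$, and $u>0$ wherever $u^0>0$. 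This works for every continuous solution and every $T$.

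Two smaller points. First, (\ref{blow-up-asm}) only prescribes $u_1\ge B(1+|x|)^{-\ka-1}$ for $|x|\ge1$; your assertion that $u_1$ is nonnegative everywhere (needed for $u^0\ge0$) is an additional hypothesis, although the lemma as stated implicitly requires it too. Second, your final step is incorrect as reasoned: strict positivity of $V_2*u^2$ does not by itself make $L((V_2*u^2)u)(x,t)$ strictly positive; one needs $u>0$ somewhere on the backward cone issued from $(x,t)$, which can fail when $t+|x|<1$ if $u_1$ vanishes on the unit ball (and of course $u(x,0)=0$, so strict positivity cannot hold at $t=0$). As you yourself observe, this is harmless for Theorem \ref{thm_bu}, which only uses $u\ge u^0$ together with positivity on $\wt{\Sigma}(1)$, and both follow from the frozen-potential argument above.
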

The proof easily follows from comparison argument by Keller \cite{K57}.
We shall omit the proof.

Next, we derive a lower bound of the solution to (\ref{IE_u}) by using (\ref{blow-up-asm}).
For $l\ge1$ and $T>0$, we define
\[
\Sigma(l):=\{(|x|,t)\in [0,\infty)\times [0,T):\ t-|x|\ge l \}
\]
and
\[
\wt{\Sigma}(l):=\{(x,t)\in\R^3\times[0,T):\ (|x|,t)\in \Sigma(l)\}.
\]
\begin{lem}
\label{lem:first-est}
Suppose that the assumptions in Theorem \ref{thm_bu} are fulfilled. Let $T>0$ and let
$u\in C(\R^3\times[0,T))$ be the solution of (\ref{IE_u}). Then, $u$ satisfies
\begin{equation}
\label{first-est}
u(x,t)\ge \frac{C_0\e }{(t+r)(t-r)^{1/2}} \quad \mbox{in}\ \wt{\Sigma}(1),
\end{equation}
where $r=|x|$ and $C_0=B/2^{5/2}$.
\end{lem}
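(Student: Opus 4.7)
The plan is to discard the Duhamel term in (\ref{IE_u}) using positivity and then estimate the free-solution contribution $\varepsilon u^0(x,t) = \varepsilon W(u_1|x,t)$ from below in the region $\widetilde{\Sigma}(1)$. Because $u_0\equiv 0$, $u\ge 0$ by Lemma \ref{lem:positive}, and $V_2>0$, the integrand in $L((V_2\ast u^2)u)$ is nonnegative, so that
\[
u(x,t)\ \ge\ \varepsilon\, W(u_1\,|\,x,t)\qquad\text{on }\widetilde{\Sigma}(1).
\]
The task then reduces to obtaining a pointwise lower bound on $W(u_1|x,t)$ that has the claimed $(t+r)^{-1}(t-r)^{-1/2}$ decay.

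Next I would use the radial minorant from (\ref{blow-up-asm}), namely $u_1(x)\ge B/\langle x\rangle^{5/2}$ for $|x|\ge 1$. The key geometric observation is that in $\widetilde{\Sigma}(1)$ one has $|x+t\omega|\ge t-r\ge 1$ for every $\omega\in S_2$ by the reverse triangle inequality, so the restriction $|x|\ge 1$ in the hypothesis on $u_1$ is automatically satisfied at every point in the sphere appearing in the definition (\ref{solop}) of $W$. Hence
\[
W(u_1|x,t)\ \ge\ \frac{Bt}{4\pi}\int_{|\omega|=1}\frac{dS_\omega}{(1+|x+t\omega|)^{5/2}}.
\]

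Now I would apply the plane-wave identity (\ref{Planewave}) with $\rho=t$ and $b(\lambda)=(1+\lambda)^{-5/2}$ to rewrite the spherical integral as a one-dimensional integral
\[
\frac{Bt}{4\pi}\cdot\frac{2\pi}{rt}\int_{t-r}^{t+r}\frac{\lambda\,d\lambda}{(1+\lambda)^{5/2}}
= \frac{B}{2r}\int_{t-r}^{t+r}\frac{\lambda\,d\lambda}{(1+\lambda)^{5/2}}.
\]
Since $\lambda\ge t-r\ge 1$ throughout the integration range, we have $1+\lambda\le 2\lambda$, and thus $\lambda/(1+\lambda)^{5/2}\ge 2^{-5/2}\lambda^{-3/2}$. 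This yields
\[
W(u_1|x,t)\ \ge\ \frac{B}{2^{7/2}\,r}\int_{t-r}^{t+r}\frac{d\lambda}{\lambda^{3/2}}.
\]

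Finally I would invoke Lemma \ref{lem:KO} with $\kappa=1/2$ (so that the constant is $C=2/\max\{1/2,1\}=2$), which gives
\[
\int_{t-r}^{t+r}\frac{d\lambda}{\lambda^{3/2}}\ \ge\ \frac{2r}{(t+r)(t-r)^{1/2}}.
\]
Combining this with the previous displays produces $W(u_1|x,t)\ge (B/2^{5/2})(t+r)^{-1}(t-r)^{-1/2}$, and multiplying by $\varepsilon$ yields the claim with $C_0=B/2^{5/2}$. There is no real obstacle here; the only subtle point is recognizing that $t-r\ge 1$ on $\widetilde{\Sigma}(1)$ permits both the elimination of the indicator $\chi_{|x+t\omega|\ge 1}$ and the comparison $1+\lambda\le 2\lambda$, and that Lemma \ref{lem:KO} is precisely tailored (via $\kappa=1/2$) to convert $\int\lambda^{-3/2}d\lambda$ into the desired $(t+r)^{-1}(t-r)^{-1/2}$ factor.
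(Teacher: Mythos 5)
Your proposal is correct and follows essentially the same route as the paper: drop the Duhamel term by positivity, bound $u^0=\e W(u_1|\cdot)$ from below via the hypothesis on $u_1$, apply the plane-wave identity (\ref{Planewave}), use $1+\la\le 2\la$ on $[t-r,t+r]$ (valid since $t-r\ge1$), and conclude with Lemma \ref{lem:KO} with $\ka=1/2$, yielding the same constant $C_0=B/2^{5/2}$. Your explicit check that $|x+t\om|\ge t-r\ge1$ justifies applying (\ref{blow-up-asm}) on the sphere is a minor clarification the paper leaves implicit.
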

\begin{proof}
From Lemma \ref{lem:positive}, (\ref{blow-up-asm}) and (\ref{IE_u}), we have
\[
u(x,t)\ge u^0(x,t)=\frac{\e t}{4\pi}\int_{|\omega|=1}u_1(x+t\omega)dS_{\omega}\ge
\frac{B\e t}{4\pi}\int_{|\omega|=1}\frac{dS_{\omega}}{\LR{|x+t\omega|}^{5/2}}
\]
in $\wt{\Sigma}(1)$.
Making use of (\ref{Planewave}) and Lemma \ref{lem:KO} with $\kappa=1/2$, we obtain
\[
u(x,t)\ge \frac{B\e}{2r}\int_{t-r}^{t+r}\frac{\lambda}{\LR{\lambda}^{5/2}} d\lambda\ge
\frac{B\e}{2^{7/2}r}\int_{t-r}^{t+r}\frac{d\lambda}{\lambda^{3/2}}\ge \frac{B\e}{2^{5/2}(t+r)(t-r)^{1/2}}
\]
in $\wt{\Sigma}(1)$. This completes proof.
\end{proof}

\subsection{Iteration argument}

Our iteration argument is done by using the following estimates.
\begin{prop}
\label{prop:iteration_frame}
Suppose that the assumptions in Theorem \ref{thm_bu} are fulfilled.
Let $j\in\N$, $T>0$ and let $u\in C(\R^3\times[0,T))$ be the solution of (\ref{IE_u}).
Then, $u$ satisfies
\begin{equation}
\label{it_1}
u(x,t)\ge \frac{C_{j}}{(t+r)(t-r)^{1/2}}\left\{\log \left(\frac{t-r}{l_j}\right)\right\}^{a_j}\quad \mbox{in} \
\wt{\Sigma}(l_j).
\end{equation}
Here, $r=|x|$,
\begin{equation}
\left\{
\begin{array}{llll}
\label{C_j}
C_j=\exp\{3^{j-1}(\log (C_1(24)^{-S_j}E^{1/2}))-\log E^{1/2}\}\quad (j\ge2),\\
\d C_1=\frac{C_0^3\pi\e^3}{2\cdot3^5},
\end{array}\right.
\end{equation}
where
\begin{equation}
\label{def:S_j,E}
S_j=\sum_{k=1}^{j-1}\frac{k}{3^k}\quad \mbox{and}\quad E=\frac{\pi}{2^6\cdot3^3}.
\end{equation}
Also, $a_j$ and $l_j$ are defined by
\begin{equation}
\label{a_j}
a_j=\frac{3^j-1}{2}\quad (j\in\N),
\end{equation}
\begin{equation}
\label{l_j}
l_j=\sum_{k=0}^{j}2^{-k}\quad (j\in\N).
\end{equation}
\end{prop}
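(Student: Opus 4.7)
The plan is to prove Proposition \ref{prop:iteration_frame} by induction on $j$, with the nested domains $\wt{\Sigma}(l_j)$, where $l_j = 2 - 2^{-j} \nearrow 2$, accommodating a geometrically decreasing slice width at each step while keeping the region of validity uniformly separated from the light cone.

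For the base case $j=1$ I would proceed in three stages. First, use Lemma \ref{lem:first-est} to obtain the logarithm-free lower bound (\ref{first-est}) on $\wt{\Sigma}(1)$. Second, substitute this into $(V_2 \ast u^2)(y,s)$, reduce the spatial integral to a one-dimensional integral via Lemma \ref{lm:Planewave} (applied to a radial minorant of $u^2$), and invoke Lemma \ref{lem:KO} to capture the single logarithm produced by the peak of $(s-\rho)^{-1}$ near $\rho = s$; this yields (\ref{eq_conv2}). Third, multiply (\ref{eq_conv2}) by (\ref{first-est}), apply Lemma \ref{lem:est-du}, pass to characteristic coordinates $\alpha = \lambda + s$, $\beta = \lambda - s$, slice the resulting $(\alpha,\beta)$-integral to the region where $\alpha$ is comparable to $t+r$ and $-\beta$ lies in $[l_1, t-r]$, and compute explicitly to reach (\ref{it_1}) with $a_1 = 1 = (3^1-1)/2$.

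For the inductive step, assuming (\ref{it_1}) at step $j$, I would run the same three stages. First, bound $(V_2 \ast u^2)(y,s)$ from below by squaring the inductive hypothesis and integrating via Lemma \ref{lm:Planewave}; the dominant contribution comes from $\rho$ close to $s$, where $\log((r+\rho)/|r-\rho|)$ is essentially constant, and the integration of $(s-\rho)^{-1}\{\log((s-\rho)/l_j)\}^{2a_j}$ produces one additional logarithm, yielding a bound of the form
\[
(V_2 \ast u^2)(y,s) \ge \frac{K_j |y|\,\{\log((s-|y|)/l_j)\}^{2a_j + 1}}{(s+|y|)^3}
\]
on a slightly contracted subdomain. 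Second, multiply by $u$ via the inductive hypothesis, gaining $a_j$ more logarithms, and apply Lemma \ref{lem:est-du} to $(V_2 \ast u^2)\, u$. Third, slice the Duhamel integral in $(\alpha,\beta)$-coordinates, restricting $-\beta$ to $[l_{j+1}, t-r]$, and perform the explicit integration; this recovers the decay $(t+r)^{-1}(t-r)^{-1/2}$ with no further logarithmic loss, so the total count is $a_{j+1} = 3 a_j + 1 = (3^{j+1} - 1)/2$ logarithms on $\wt{\Sigma}(l_{j+1})$.

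The hard part will be controlling the constants $C_j$. Each slicing at level $j$ loses a factor comparable to a fixed power of $2^{-j}$ (the width of the $j$-th slice), and these losses compound multiplicatively across iterations. Careful bookkeeping should yield a linear recursion of the form $\log C_{j+1} = 3\log C_j + \log E - j\log 24$, whose closed form is exactly (\ref{C_j}), with the weighted sum $S_j = \sum_{k=1}^{j-1} k/3^k$ from (\ref{def:S_j,E}) encoding the cumulative slicing loss. The most delicate point will be verifying that the slice widths $2^{-j}$ are simultaneously small enough to preserve the $(t+r)^{-1}(t-r)^{-1/2}$ decay at each step and large enough for the cumulative loss to keep $\log C_j / 3^j$ bounded; the latter is what is ultimately needed to send $j \to \infty$ and extract the lifespan estimate $T(\e) \le \exp(C\e^{-2})$ in Theorem \ref{thm_bu}.
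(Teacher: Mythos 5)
Your overall strategy does coincide with the paper's: positivity by Keller's comparison, the free-wave bound of Lemma \ref{lem:first-est}, a lower bound on $V_2*u^2$ via Lemma \ref{lm:Planewave}, Lemma \ref{lem:est-du} and characteristic coordinates for the Duhamel term, the Agemi--Kurokawa--Takamura slicing, and the recursion $\log C_{j+1}=3\log C_j+\log E-j\log 24$, i.e. $C_{j+1}=C_j^3E/24^{j}$, which yields \eqref{C_j}; likewise $a_{j+1}=3a_j+1$ is right. The genuine problem is the slicing region you specify. You restrict $s-\lambda$ to $[l_{j+1},t-r]$ (and to $[l_1,t-r]$ at $j=1$), i.e. you remove only a band of constant width $2^{-(j+1)}$ at the bottom of the $\beta$-range. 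On what remains, $\log(\beta/l_j)$ is as small as $\log(l_{j+1}/l_j)\lesssim 2^{-(j+1)}$ near the lower endpoint, which is exactly where the weight $(t-r-\beta)^2$ is largest; so you cannot pull out the factor $\{\log((t-r)/l_{j+1})\}^{3a_j+1}$, and there is no explicit antiderivative to fall back on. Already at $j=1$ this choice produces only a bounded constant ($\log l_1$) in place of the growing factor $\log((t-r)/l_1)$, and the whole point of the critical-case iteration is to generate that growth.

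The correct slice sits at the top of the range and is proportional to $t-r$: keep $\beta\in[l_j(t-r)/l_{j+1},\,t-r]$ (at $j=1$, $\beta\in[2(t-r)/3,\,t-r]$). There $\log(\beta/l_j)\ge\log((t-r)/l_{j+1})$ can be pulled out, and the remaining integral $\int(t-r-\beta)^2d\beta\ge 3^{-1}(1-l_j/l_{j+1})^3(t-r)^3$ with $1-l_j/l_{j+1}=2^{-(j+1)}/l_{j+1}\ge 2^{-(j+2)}$; this, together with $3a_j+1\le 3^{j+1}/2$, is precisely the source of the $24^{-j}$ loss and of $S_j$ in \eqref{C_j}--\eqref{def:S_j,E}. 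Your later remarks about ``slice width $\sim 2^{-j}$'' show you have the right quantitative picture, but the region must be the proportional top slice, not a constant-width trim at the bottom. Two smaller inaccuracies: in the base case the logarithm comes from the exact integration $\int_r^{t-1}d\rho/(t-\rho)=\log(t-r)$ after bounding the angular ($\eta$-) integral below by $2r/(t+r)$; Lemma \ref{lem:KO} is used with $\kappa=3$ on the $\alpha$-integral (and the analogue of $\kappa=0$ for the $\eta$-integral), not to produce the logarithm. Also, the convolution bound \eqref{conv-est3} holds on all of $\wt{\Sigma}(l_j)$, with no contraction of the domain; the contraction to $\wt{\Sigma}(l_{j+1})$ occurs only in the Duhamel step.
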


\begin{proof}
We apply the slicing method developed by Agemi, Kurokawa and Takamura \cite{AKT00}.
The proof of (\ref{it_1}) follows from the induction.

We first show that (\ref{it_1}) holds for $j=1$.
For the convolution term, the following estimate is derived by using the estimate (\ref{first-est}):
\begin{equation}
\label{conv-first1}
(V_{2}*u^2)(x,t)\ge\frac{C_0^2\pi \e^2r \log(t-r) }{(t+r)^3}\quad \mbox{in} \
\wt{\Sigma}(1).
\end{equation}
Actually, from the definition of $V_{2}$ and using the polar coordinate, we get
\begin{equation}
\label{frame-conv}
\d(V_{2}*u^2)(x,t)=\int_{\R^3}\frac{u^2(z,t)}{|x-z|^2}dz
\d=\int_{0}^{\infty}u^2(\rho \omega,t)\rho^2
\int_{|\omega|=1}\frac{dS_{\omega}}{|x-\rho\omega|^2}d\rho
\end{equation}
for $(x,t)\in \R^3\times[0,T)$.

Making use of (\ref{first-est}) and (\ref{Planewave}), we obtain
\[
\begin{array}{lll}
\d(V_{2}*u^2)(x,t)&\ge
\d C_0^2\e^2\int_{r}^{t-1}\frac{\rho^2}{(t+\rho)^2(t-\rho)}
\int_{|\omega|=1}\frac{dS_{\omega}}{|x-\rho\omega|^2}d\rho\\
&\d=\frac{2 C_0^2\pi\e^2}{r}\int_{r}^{t-1}\frac{\rho}{(t+\rho)^2(t-\rho)}
\int_{\rho-r}^{\rho+r}\frac{d\eta}{\eta}d\rho\\
&\d \ge 2C_0^2\pi \e^2 \int_{r}^{t-1}\frac{1}{(t+\rho)^2(t-\rho)}
\int_{\rho-r}^{\rho+r}\frac{d\eta}{\eta}d \rho
\end{array}
\]
in $\wt{\Sigma}(1)$.
Noticing that
\begin{equation}
\label{conv-est2}
\int_{\rho-r}^{\rho+r}\frac{d\eta}{\eta}\ge\frac{2r}{\rho+r}\ge\frac{2r}{t+r}
\end{equation}
for $t-r\ge1$ and $t\ge \rho$, we have
\[
\begin{array}{lll}
\d(V_{2}*u^2)(x,t)
&\d\ge
\frac{4C_0^2\pi \e^2 r}{t+r}\int_{r}^{t-1}\frac{d \rho}{(t+\rho)^2(t-\rho)} \\
&\d\ge \frac{C_0^2\pi \e^2r }{(t+r)^3}\int_{r}^{t-1}\frac{d\rho}{t-\rho}
\d=\frac{C_0^2\pi\e^2 r \log(t-r) }{(t+r)^3}
\end{array}
\]
in $\wt{\Sigma}(1)$. Thus, we see that (\ref{conv-first1}) is true.

We next estimate for the Duhamel term by using the estimates (\ref{conv-first1}) and (\ref{first-est}).
Let $\chi_{A}$ be a characteristic function on a set $A$. Here, we do not distinguish between
$\chi_{\Sigma(l)}$ and $\chi_{\wt{\Sigma}(l)}$ for $l\ge1$.

By (\ref{conv-first1}) and (\ref{first-est}), we note that
\begin{equation}
\label{conv-first1-1}
\chi_{\wt{\Sigma}(1)}(x,t)(V_{2}*u^2)(x,t)\ge \chi_{\Sigma(1)}(r,t)
\frac{C_0^2\pi \e^2r \log(t-r) }{(t+r)^3}
\end{equation}
and
\begin{equation}
\label{first-est-1}
\chi_{\wt{\Sigma}(1)}(x,t)u(x,t)\ge \chi_{{\Sigma(1)}}(r,t)\frac{C_0\e }{(t+r)(t-r)^{1/2}}
\end{equation}
hold for $(x,t)\in\R^3\times[0,T)$.

Let $(x,t)\in \wt{\Sigma}(l_1)(=\wt{\Sigma}(3/2))$. Noting the positivity of the linear term of (\ref{IE_u}) and making use of the estimates
(\ref{conv-first1-1}), (\ref{first-est-1}) and Lemma \ref{lem:est-du}, we have
\[
\begin{array}{lll}
u(x,t)&\ge L((V_{2}*u^2)u)(x,t)\\
&\ge L(\chi_{\wt{\Sigma}(1)}(V_{2}*u^2)u)(x,t) \\
&\d\ge\frac{C_0^3\pi\e^3}{2r} \iint_{D(r,t)\cap \Sigma(1)}
\frac{\lambda^2\log(s-\lambda) }{(s+\lambda)^4(s-\lambda)^{1/2}}d\lambda ds
\end{array}
\]
in $\wt{\Sigma}(l_1)$.
Changing the variables in the above integral by
\begin{equation}
\label{alpha_beta}
\alpha=s+\lambda,\  \beta=s-\lambda,
\end{equation}
we get
\[
\begin{array}{lll}
u(x,t)&\d\ge \frac{C_0^3\pi\e^3}{4r}\int_{1}^{t-r}\frac{\log \beta }{\beta^{1/2}}
\int_{t-r}^{t+r}\frac{(\alpha-\beta)^2}{\alpha^4}d\alpha d\beta\\
&\d\ge \frac{C_0^3\pi\e^3}{4r(t-r)^{1/2}}\int_{1}^{t-r}(t-r-\beta)^2\log \beta
\int_{t-r}^{t+r}\frac{d\alpha}{\alpha^4}d\beta
\end{array}
\]
in $\wt{\Sigma}(l_1)$. Applying Lemma \ref{lem:KO} with $\kappa=3$ to the $\alpha$-integral,
we obtain
\[
u(x,t)\d\ge \frac{C_0^3\pi\e^3}{6(t+r)(t-r)^{7/2}}
\int_{1}^{t-r}(t-r-\beta)^2\log \beta d\beta
\]
in $\wt{\Sigma}(l_1)$.
Noticing that $2(t-r)/3\ge1$ for $(r,t)\in \Sigma(l_1)$, we obtain
\[
\begin{array}{llll}
u(x,t)&\d\ge \frac{C_0^3\pi\e^3}{6(t+r)(t-r)^{7/2}}
\int_{2(t-r)/3}^{t-r}(t-r-\beta)^2\log \beta d\beta\\
&\d \ge \frac{C_0^3\pi\e^3\log (2(t-r)/3)}{6(t+r)(t-r)^{7/2}}
\int_{2(t-r)/3}^{t-r}(t-r-\beta)^2d\beta\\
&\d\ge \frac{C_0^3\pi\e^3}{2\cdot3^5(t+r)(t-r)^{1/2}}\log \{2(t-r)/3\}
\end{array}
\]
in $\wt{\Sigma}(l_1)$. Hence the estimate (\ref{it_1}) holds for $j=1$.

Assume that (\ref{it_1}) holds for $j\in\N$. From Lemma 3.1 in \cite{W17}, we note that the sequence $C_j$ in (\ref{C_j}) satisfies the following relation:
\begin{equation}
\label{C_j-new}
C_{j+1}=\frac{C_j^3E}{24^j} \quad (j\in\N),
\end{equation}
where $E$ is defined in (\ref{C_j}).

Similarly to the proof of (\ref{conv-first1}),
we first derive the estimate for the convolution term by using (\ref{it_1}):
\begin{equation}
\label{conv-est3}
(V_{2}*u^2)(x,t)\ge\frac{C_j^2\pi r}{(2a_j+1)(t+r)^3} \left\{\log\left((t-r)/l_j\right)\right\}^{2a_j+1}
\quad \mbox{in} \
\wt{\Sigma}(l_{j}).
\end{equation}

Indeed, putting the estimate (\ref{it_1}) to the integral of (\ref{frame-conv}) and using (\ref{Planewave}) and
(\ref{conv-est2}),
we get
\[
\begin{array}{lll}
(V_{2}*u^2)(x,t)
&\d\ge \frac{2C_j^2\pi}{r}\int_{r}^{t-l_j}\frac{\rho\left\{\log \left((t-\rho)/l_j\right)\right\}^{2a_j}}
{(t+\rho)^2(t-\rho)}\int_{\rho-r}^{\rho+r}\frac{d\eta}{\eta}d\rho\\
&\ge\d \frac{4C_j^2\pi r}{t+r}\int_{r}^{t-l_j}\frac{\left\{\log \left((t-\rho)/l_j\right)\right\}^{2a_j}}
{(t+\rho)^2(t-\rho)}d\rho\\
&\ge \d \frac{C_j^2\pi r}{(t+r)^3}\int_{r}^{t-l_j}\frac{\left\{\log \left((t-\rho)/l_j\right)\right\}^{2a_j}}
{t-\rho}d\rho\\
&=\d \frac{C_j^2\pi r}{(2a_j+1)(t+r)^3}\int_{r}^{t-l_j}-\frac{\p }{\p \rho}
\left\{\log \left((t-\rho)/l_j\right)\right\}^{2a_j+1}d\rho\\
&=\d \frac{C_j^2\pi r}{(2a_j+1)(t+r)^3} \left\{\log\left((t-r)/l_j\right)\right\}^{2a_j+1}
\end{array}
\]
in $\wt{\Sigma}(l_{j})$. Thus, we see that (\ref{conv-est3}) is true.

We next estimate for the Duhamel term by using the estimates (\ref{conv-est3}) and (\ref{it_1}).
By (\ref{conv-est3}) and (\ref{it_1}), we note that
\begin{equation}
\label{conv-first3-1}
\chi_{\wt{\Sigma}(l_j)}(x,t)(V_{2}*u^2)(x,t)\ge\chi_{\Sigma(l_j)}(r,t)\frac{C_j^2\pi r}{(2a_j+1)(t+r)^3} \left\{\log\left((t-r)/l_j\right)\right\}^{2a_j+1}
\end{equation}
and
\begin{equation}
\label{it_1-1}
\chi_{\wt{\Sigma}(l_j)}(x,t)u(x,t)\ge \chi_{\Sigma(l_j)}(r,t)\frac{C_{j}}{(t+r)(t-r)^{1/2}}
\left\{\log\left((t-r)/l_j\right)\right\}^{a_j}
\end{equation}
hold for $(x,t)\in\R^3\times[0,T)$.

Let $(x,t)\in \wt{\Sigma}(l_{j+1})$. Making use of the the positivity of the linear term of (\ref{IE_u}), and (\ref{conv-first3-1}), (\ref{it_1-1}) and Lemma \ref{lem:est-du}, we have
\[
\begin{array}{lll}
u(x,t)
&\d\ge L(\chi_{\wt{\Sigma}(l_j)}(V_{2}*u^2)u)(x,t) \\
&\d\ge\frac{C_j^3\pi}{2(2a_j+1)r} \iint_{D(r,t)\cap\Sigma(l_j)}
\frac{\lambda^2\left\{ \log ((s-\lambda)/l_j)\right\}^{3a_j+1}}{(s+\lambda)^4(s-\lambda)^{1/2}}d\lambda ds
\end{array}
\]
in $\wt{\Sigma}(l_{j+1})$.
Changing the variables in the above integral by (\ref{alpha_beta}), we get
\[
\begin{array}{lll}
u(x,t)&\d\ge \frac{C_j^3\pi}{4(2a_j+1)r}\int_{l_j}^{t-r}\frac{\left\{\log (\beta/l_j)\right\}^{3a_j+1} }{\beta^{1/2}}
\int_{t-r}^{t+r}\frac{(\alpha-\beta)^2}{\alpha^4}d\alpha d\beta\\
&\d\ge \frac{C_j^3\pi}{4(2a_j+1)r(t-r)^{1/2}}\int_{l_j}^{t-r}(t-r-\beta)^2
\left\{\log (\beta/l_j)\right\}^{3a_j+1}
\int_{t-r}^{t+r}\frac{d\alpha}{\alpha^4}d\beta
\end{array}
\]
in $\wt{\Sigma}(l_{j+1})$.  It follows from Lemma \ref{lem:KO} with $\kappa=3$ that
\[
u(x,t)\d\ge \frac{C_j^3\pi}{6(2a_j+1)(t+r)(t-r)^{7/2}}
\int_{l_j}^{t-r}(t-r-\beta)^2\left\{\log (\beta/l_j)\right\}^{3a_j+1}d\beta
\]
in $\wt{\Sigma}(l_{j+1})$.
We note that $l_j(t-r)/l_{j+1}\ge l_j$ holds for $(r,t)\in \Sigma(l_{j+1})$.
Similarly to the proof of the case $j=1$, we obtain
\[
\begin{array}{llll}
u(x,t)&\d\ge \frac{C_j^3\pi}{6(2a_j+1)(t+r)(t-r)^{7/2}}
\int_{l_j(t-r)/l_{j+1}}^{t-r}(t-r-\beta)^2\left\{\log (\beta/l_j)\right\}^{3a_j+1} d\beta\\
&\d \ge \frac{C_j^3\pi\left\{\log((t-r)/l_{j+1})\right\}^{3a_j+1}}{6(2a_j+1)(t+r)(t-r)^{7/2}}
\int_{l_j(t-r)/l_{j+1}}^{t-r}(t-r-\beta)^2d\beta\\
&\d\ge \frac{C_j^3\pi(1-l_j/l_{j+1})^3}{2\cdot3^2(3a_j+1)(t+r)(t-r)^{1/2}}\left\{\log ((t-r)/l_{j+1})\right\}^{3a_j+1}
\end{array}
\]
in $\wt{\Sigma}(l_{j+1})$. Since $1<l_j<2$, we have $1-l_j/l_{j+1}=2^{-(j+1)}/l_{j+1}\ge 2^{-(j+2)}$.
Recalling the definition of $a_j$, we get $a_{j+1}=3a_j+1\le 3^{j+1}/2$.
From (\ref{C_j-new}), we obtain
\[
\begin{array}{lll}
u(x,t)&\d\ge \frac{C_j^3\pi}{2^6\cdot3^3\cdot24^{j}(t+r)(t-r)^{1/2}}
\left\{\log ((t-r)/l_{j+1})\right\}^{3a_j+1}\\
&\d=C_{j+1}\frac{\left\{\log ((t-r)/l_{j+1})\right\}^{3a_j+1}}{(t+r)(t-r)^{1/2}}
\end{array}
\]
in $\wt{\Sigma}(l_{j+1})$.  Therefore, (\ref{it_1}) holds for all $j\in\N$.
The proof of Proposition \ref{prop:iteration_frame} is now completed.
\end{proof}
\vskip10pt
\par\noindent
\subsection{Proof of Theorem \ref{thm_bu}}

Theorem \ref{thm_bu} is proved by contradiction argument.
\begin{proof}[Proof of Theorem \ref{thm_bu}.]
Taking $\e_0=\e_0(u_1)>0$ so small that
\[
\exp(F^{-2/3}\e_0^{-2})>4,
\]
where we set
\begin{equation}
\label{const:F}
F=C_0^3\pi2^{-1}\cdot3^{-5}(24)^{-S}E^{1/2}>0.
\end{equation}
Here, $C_0$ $S$ and $E$ are defined in (\ref{first-est}), (\ref{def:S}) and (\ref{def:S_j,E}) respectively.
Next, for a fixed $\e\in(0,\e_0]$, we suppose that $T$ satisfies
\begin{equation}
\label{T_asm_1}
T>\exp(2F^{-2/3}\e^{-2})\ (>4).
\end{equation}
Let $u\in C(\R^3\times[0,T))$ be the solution of (\ref{IE_u}) satisfying (\ref{T_asm_1}).
Setting
\begin{equation}
\label{def:S}
S=\lim_{j\rightarrow \infty}S_j\left(=\sum_{k=1}^{j-1}\frac{k}{3^k}\right),
\end{equation}
we see that $S_j\le S$ for all $j\in\N$.
Since the definitions of $C_1$ in (\ref{C_j}) and (\ref{const:F}), the sequence $C_j$ in
(\ref{C_j}) implies
\begin{equation}
\label{C_j_est}
\begin{array}{llll}
C_{j}&\ge\exp\{3^{j-1}\{\log(C_{1}(24)^{-S}E^{1/2})\}-\log E^{1/2}\}&\\
&=E^{-1/2}\exp\{3^{j-1}\{\log(C_{1}(24)^{-S}E^{1/2})\}\}&\\
&=E^{-1/2}\exp\{3^{j-1}\{\log(\e^3F)\}\}.&
\end{array}
\end{equation}
\par
Let $(x,t)\in \wt{\Sigma}(2)$. Combining (\ref{C_j_est}) with (\ref{it_1}) and noting $l_j<2$, we have
\[
\begin{array}{ll}
\d u(x,t)\d\ge E^{-1/2}\exp\{3^{j-1}\{\log(\e^3F)\}\}
\frac{\left\{\log ((t-r)/2)\right\}^{(3^j-1)/2}}{(t+r)(t-r)^{1/2}}
\end{array}
\]
in $\wt{\Sigma}(2)$.
Since
\[
\left(\log \frac{t-r}{2}\right)^{(3^j-1)/2}=
\exp\left\{3^{j-1}\left\{\log\left(\log\frac{t}{4}\right)^{3/2}\right\}\right\}
\left(\log\frac{t}{4}\right)^{-1/2}
\] in $\Gamma:=\{r=t/2\}(\subset  \Sigma(2))$,
we get
\[
u(x,t)
\ge2^{3/2}\cdot3^{-1}E^{-1/2}\exp\{3^{j-1}K(t)\}t^{-3/2}\left\{\log (t/4)\right\}^{-1/2}\\
\]
in $\wt{\Gamma}:=\{(x,t)\in\R^3\times[0,T): (r,t)\in \Gamma\}$, where we set
\[
K(t)=\log \left\{\e^3F\left\{\log (t/4)\right\}^{3/2}\right\}.
\]
\par
By (\ref{T_asm_1}) and the definition of $F$,
we have $K(T)>0$.
Therefore, we get $u(x,t)\rightarrow \infty$ as
$j\rightarrow \infty$ in $\wt{\Gamma}$. The proof of Theorem \ref{thm_bu} is now completed.
\end{proof}

\section*{Acknowledgement}
The first author was supported by Grant-in-Aid for JSPS Research Fellow 20J12750.
The second author has been supported
by the Grant-in-Aid for Scientific Research (B) (No.18H01132), the Grant-in-Aid for Scientific Research (B) (No.19H01795) and Young Scientists Research (No. 20K14351), Japan Society for the Promotion of Science.


\bibliographystyle{plain}

\end{document}